\newfont{\teneufm}{eufm10}
\newfont{\seveneufm}{eufm7}
\newfont{\fiveeufm}{eufm5}
\def\bbbc{{\mathchoice {\setbox0=\hbox{$\displaystyle\rm C$}\hbox{\hbox
to0pt{\kern0.4\wd0\vrule height0.9\ht0\hss}\box0}}
{\setbox0=\hbox{$\textstyle\rm C$}\hbox{\hbox
to0pt{\kern0.4\wd0\vrule height0.9\ht0\hss}\box0}}
{\setbox0=\hbox{$\scriptstyle\rm C$}\hbox{\hbox
to0pt{\kern0.4\wd0\vrule height0.9\ht0\hss}\box0}}
{\setbox0=\hbox{$\scriptscriptstyle\rm C$}\hbox{\hbox
to0pt{\kern0.4\wd0\vrule height0.9\ht0\hss}\box0}}}}
\def\bbbq{{\mathchoice {\setbox0=\hbox{$\displaystyle\rm
Q$}\hbox{\raise 0.15\ht0\hbox to0pt{\kern0.4\wd0\vrule
height0.8\ht0\hss}\box0}} {\setbox0=\hbox{$\textstyle\rm
Q$}\hbox{\raise 0.15\ht0\hbox to0pt{\kern0.4\wd0\vrule
height0.8\ht0\hss}\box0}} {\setbox0=\hbox{$\scriptstyle\rm
Q$}\hbox{\raise 0.15\ht0\hbox to0pt{\kern0.4\wd0\vrule
height0.7\ht0\hss}\box0}} {\setbox0=\hbox{$\scriptscriptstyle\rm
Q$}\hbox{\raise 0.15\ht0\hbox to0pt{\kern0.4\wd0\vrule
height0.7\ht0\hss}\box0}}}}
\def\bbbt{{\mathchoice {\setbox0=\hbox{$\displaystyle\rm
T$}\hbox{\hbox to0pt{\kern0.3\wd0\vrule height0.9\ht0\hss}\box0}}
{\setbox0=\hbox{$\textstyle\rm T$}\hbox{\hbox
to0pt{\kern0.3\wd0\vrule height0.9\ht0\hss}\box0}}
{\setbox0=\hbox{$\scriptstyle\rm T$}\hbox{\hbox
to0pt{\kern0.3\wd0\vrule height0.9\ht0\hss}\box0}}
{\setbox0=\hbox{$\scriptscriptstyle\rm T$}\hbox{\hbox
to0pt{\kern0.3\wd0\vrule height0.9\ht0\hss}\box0}}}}
\def\bbbs{{\mathchoice
{\setbox0=\hbox{$\displaystyle     \rm S$}\hbox{\raise0.5\ht0\hbox
to0pt{\kern0.35\wd0\vrule height0.45\ht0\hss}\hbox
to0pt{\kern0.55\wd0\vrule height0.5\ht0\hss}\box0}}
{\setbox0=\hbox{$\textstyle        \rm S$}\hbox{\raise0.5\ht0\hbox
to0pt{\kern0.35\wd0\vrule height0.45\ht0\hss}\hbox
to0pt{\kern0.55\wd0\vrule height0.5\ht0\hss}\box0}}
{\setbox0=\hbox{$\scriptstyle      \rm S$}\hbox{\raise0.5\ht0\hbox
to0pt{\kern0.35\wd0\vrule height0.45\ht0\hss}\raise0.05\ht0\hbox
to0pt{\kern0.5\wd0\vrule height0.45\ht0\hss}\box0}}
{\setbox0=\hbox{$\scriptscriptstyle\rm S$}\hbox{\raise0.5\ht0\hbox
to0pt{\kern0.4\wd0\vrule height0.45\ht0\hss}\raise0.05\ht0\hbox
to0pt{\kern0.55\wd0\vrule height0.45\ht0\hss}\box0}}}}
\def\bbbz{{\mathchoice {\hbox{$\sf\textstyle Z\kern-0.4em Z$}}
{\hbox{$\sf\textstyle Z\kern-0.4em Z$}} {\hbox{$\sf\scriptstyle
Z\kern-0.3em Z$}} {\hbox{$\sf\scriptscriptstyle Z\kern-0.2em
Z$}}}}
\newtheorem{theorem}{Theorem}
\newtheorem{lemma}[theorem]{Lemma}
\newtheorem{cor}[theorem]{Corollary}
\newtheorem{quest}[theorem]{Open Question}
\newtheorem{rem}[theorem]{Remark}
\numberwithin{table}{section}
\numberwithin{equation}{section}
\numberwithin{figure}{section}
\numberwithin{theorem}{section}
\def\squareforqed{\hbox{\rlap{$\sqcap$}$\sqcup$}}
\def\qed{\ifmmode\squareforqed\else{\unskip\nobreak\hfil
\penalty50\hskip1em\null\nobreak\hfil\squareforqed
\parfillskip=0pt\finalhyphendemerits=0\endgraf}\fi}
\def\cA{{\mathcal A}}
\def\cE{{\mathcal E}}
\def\cI{{\mathcal I}}
\def\cJ{{\mathcal J}}
\def\cL{{\mathcal L}}
\def\cP{{\mathcal P}}
\def\cQ{{\mathcal Q}}
\def\cR{{\mathcal R}}
\def\cS{{\mathcal S}}
\def\cU{{\mathcal U}}
\def\cV{{\mathcal V}}
\def \sf {\mathfrak s}
\newcommand{\ignore}[1]{}
\def\e{\mathbf{e}}
\def \Z{\mathbb{Z}}
\def \Z{\mathbb{Z}}
\def\mand{\qquad\mbox{and}\qquad}
\def\\{\cr}
\def\({\left(}
\def\){\right)}
\def\fl#1{\left\lfloor#1\right\rfloor}
\def\rf#1{\left\lceil#1\right\rceil}
\def\em{\mathbf{e}_m}
\def\e{\mathbf{e}}
\begin{document}

\title{Cyclotomic Coefficients: Gaps and Jumps}

\author[Camburu]{Oana-Maria Camburu}
\address{Department of Computer Science, University of Oxford, Oxford OX1 3QD, United Kingdom }
\email{oana-maria.camburu@cs.ox.ac.uk}

 \author[Ciolan]{Emil-Alexandru Ciolan}

\address{Rheinische Friedrich-Wilhelms-Universit\"at Bonn, Regina-Pacis-Weg 3, D-53113 Bonn, Germany}
 \email{ciolan@uni-bonn.de}

\author[Luca]{Florian Luca} 
\address{School of Mathematics, University of the Witwatersrand, 
Private Bag X3, Wits 2050, South Africa}
\email{florian.luca@wits.ac.za}

 \author[Moree] {Pieter Moree}

\address{Max-Planck-Institut f{\"u}r Mathematik, Vivatsgasse 7, D-53111 Bonn, Germany}
\email{moree@mpim-bonn.mpg.de}

 \author[Shparlinski] {Igor E. Shparlinski}

\address{Department of Pure Mathematics, University of New South Wales,
Sydney, NSW 2052, Australia}
\email{igor.shparlinski@unsw.edu.au}

%\author[O.-M. Camburu]{Oana-Maria Camburu}
%\address{Department of Computer Science, University of Oxford, Oxford OX1 3QD, United Kingdom }
%\email{oana-maria.camburu@cs.ox.ac.uk}
%
% \author[E.-A. Ciolan]{Emil-Alexandru Ciolan}
% 
% 
%\address{Rheinische Friedrich-Wilhelms-Universit\"at Bonn, Regina-Pacis-Weg 3, D-53113 Bonn, Germany}
% \email{ciolan@uni-bonn.de}
%
%\author[F. Luca]{Florian Luca} 
%\address{School of Mathematics, University of the Witwatersrand, 
%Private Bag X3, Wits 2050, South Africa}
%\email{florian.luca@wits.ac.za}
%
% \author[P.  Moree] {Pieter Moree}
%
%\address{Max-Planck-Institut f{\"u}r Mathematik, Vivatsgasse 7, D-53111 Bonn, Germany}
%\email{moree@mpim-bonn.mpg.de}
%
% 
% \author[I. E. Shparlinski] {Igor E. Shparlinski}
%
%\address{Department of Pure Mathematics, University of New South Wales,
%Sydney, NSW 2052, Australia}
%\email{igor.shparlinski@unsw.edu.au}

\begin{abstract} We improve several recent
results by Hong, Lee, Lee and Park (2012) on gaps and  Bzd{\c e}ga (2014)   
on jumps amongst the coefficients of 
cyclotomic polynomials. Besides direct improvements, we also introduce 
several new techniques that have never been used in this area. 
%and present comments and further elaborations that their 
%margin is too small to contain.
\end{abstract}

\keywords{Coefficients of cyclotomic polynomials, products of primes, numerical semigroups, 
double Kloosterman sums}
 \subjclass[2010]{11B83, 11L07, 11N25}

\maketitle

\section{Introduction}

As usual, for an integer $n\ge1$, we use $\Phi_n(Z)$ to denote 
the {\it $n$th cyclotomic polynomial\/}, that is,
$$
\Phi_n(Z) = \prod_{\substack{j=0\\\gcd(j,n)=1}}^{n-1}
\(Z - \e_n(j)\), 
$$
where for an integer $m\ge 1$ and a real $z$, we put
$$\em(z) = \exp(2 \pi i z/m).
$$
Clearly $\deg \Phi_n= \varphi(n)$, where $\varphi(n)$ is the Euler 
function. 
Using the above definition one sees that
\begin{equation}
\label{allebegin...} 
Z^n-1=\prod_{d\mid n}\Phi_d(Z).
\end{equation} 
The M\"obius inversion formula then yields
\begin{equation}
\label{prod1}
\Phi_n(Z)=\prod_{d\mid n}(Z^d-1)^{\mu(n/d)},
\end{equation}
where $\mu(n)$ denotes the M\"obius function. 

We  write 
$$
\Phi_n(Z)=\sum_{k=0}^{\varphi(n)}a_n(k)Z^k.
$$
For $n>1$ clearly $Z^{\varphi(n)}\Phi_n(1/Z)=\Phi_n(Z)$ and so 
\begin{equation}
\label{eq:sym}
a_n(k)=a_n(\varphi(n)-k), \qquad 0\le k\le \varphi(n),\qquad n>1.
\end{equation}
Recently, there has been a burst of activity in studying 
the cyclotomic coefficients $a_n(k)$, 
see, for example,~\cite{Bzd-0,Bzd-1,Bzd-2,Bzd-3,CGMZ,Fouv,GaMo1,GaMo2,GaMoWi,MoRo,ZhZh}
and references therein. Furthermore, in several works 
{\it inverse cyclotomic polynomials\/} 
$$
\Psi_n(Z) = (Z^n-1)/\Phi_n(Z)
$$
have also been considered, 
see~\cite{Bzd-4,HLL, HLLP, HLLP2, Mor1}.

The identities $\Phi_{2n}(Z)=\Phi_n(-Z)$, with $n>1$ odd and 
$\Phi_{pm}(Z)=\Phi_m(Z^p)$ if $p\mid m$, show that, as far as the study of coefficients is concerned,
the complexity of $\Phi_n(Z)$ is determined by its number of distinct odd prime factors.
Most of the recent activity concerns the so called {\it binary\/} and {\it ternary\/} 
cyclotomic polynomials, which are polynomials $\Phi_n(Z)$ 
with $n=pq$ and $n=pqr$, respectively, where $p$, $q$ and $r$ are 
pairwise distinct odd primes. 
In particular, the long standing
{\it Beiter conjecture\/} about  coefficients of ternary 
cyclotomic polynomials has been shown to be wrong
(in a very strong sense) by Gallot and Moree~\cite{GaMo2}, 
see also~\cite{Bzd-0}. 
It is quite remarkable that the results of~\cite{GaMo2} are
based on seemingly foreign to the problem and deep analytic 
results such as bounds 
of Kloosterman sums (see~\cite[Theorem~11.11]{IwKow}) 
and a result of  Duke,  Friedlander
and Iwaniec~\cite{DuFrIw1} on the distribution of roots of
quadratic congruences.  

Furthermore, Fouvry~\cite{Fouv} has used bounds of 
exponential sums with reciprocals of primes from~\cite{FouShp} 
in studying the sparsity of binary cyclotomic polynomials
and improved a result of Bzd{\c e}ga~\cite{Bzd-1}. 
It is quite possible that more recent bounds of Baker~\cite{Bak}
and Irving~\cite{Irv} can lead to further progress in this
direction.

Here we continue to study ternary cyclotomic and inverse 
cyclotomic polynomials and using  some classical and 
more recent tools from analytic number theory, we 
improve several previous results. We also show how to employ a rather nonstandard tool of using numerical semigroups to study the binary cyclotomic polynomials. We give a brief summary of our
contributions in Section~\ref{recap}.

We recall that the notations $U = O(V)$, $U \ll V$ 
 and $V \gg U$ 
are all equivalent to the assertion that the
inequality $|U|\le cV$ holds for some constant $c>0$. If
the constant $c$ depends on a parameter, say $\varepsilon$, we
indicate this as $U \ll_{\varepsilon} V$, etc.
We also write $U \asymp V$ if  $U \ll V \ll U$.
For an integer $m$ and a real $M\ge 1$, we write 
$m \sim M$ to indicate that $m \in [M, 2M]$. If $\cA$ is 
a set of non-negative integers we denote by $\cA(x)$ the subset of integers
$n\in \cA$ with $n\le x$.

As usual, we let $\pi(x)$ denote the number of primes $p \le x$. 
A few times we  use the estimate 
\begin{equation}
\label{simplepi}
\pi(x)=\frac{x}{\log x}+O\(\frac{x}{(\log x)^2}\).
\end{equation}

We always use the letters $\ell$, $p$, $q$ and $r$ to denote prime
numbers, while $k$, $m$ and $n$ always denote positive integers.

\section{Maximum gap}

\subsection{Definitions and background}
\label{achtergrond}
Given a polynomial
$$f(Z) = c_1Z^{e_1} + \cdots  +c_t Z^{e_t}\in  {\Z} [Z],\text{with~} c_i \ne 0
\text{~and~}e_1< \cdots < e_t,
$$ 
we define the {\it maximum gap\/} of $f$ as
$$
g(f) = \max_{1\le i<t}(e_{i+1}-e_i),
$$
where we set $g(f) =0$ when $t = 1$. 
The study  of $g(\Phi_n)$ and $g(\Psi_n)$ has been initiated by 
Hong, Lee, Lee and Park~\cite{HLLP} 
who have reduced the study of these gaps to the case where $n$ is square-free and odd. 
They were led to their gap study in an attempt to provide a simple and exact formula for the minimum
Miller loop length in the Ate$_i$ pairing arising in elliptic curve cryptography, 
see~\cite{HLL,HLLP,HLLP2}.
As they write in~\cite{HLLP2}, the crucial idea in finding this exact formula is that it
becomes more manageable when it is suitably recast in terms of inverse cyclotomic polynomials and consequently
turns into a problem involving the maximum gaps.

It is easy to see~\cite{HLLP}  
that if $p<q$ are odd primes, then 
$$
g(\Phi_{p})=1, \qquad g(\Psi_{p})=1, \qquad g(\Psi_{pq})=q-p+1.
$$ 
The simplest non-trivial case occurs when $n$ is a product 
of two distinct primes, where by~\cite[Theorem~1]{HLLP}, 
for two primes $3 \le p< q$ we have 
\begin{equation}
\label{eq:pq}
g(\Phi_{pq})=p-1.
\end{equation}

For ternary $n$ we have by~\cite[Theorems~2 and~3]{HLLP} the following
partial result on $g(\Psi_n)$.
\begin{lemma} 
 \label{lem:HLLP-3}
Put 
$$
\cR_3=   \{n =  pqr  ~:~p<q<r\ \text{primes}, \ 4(p -1) > q,\ p^2>r\}.
$$
Let $n=pqr$ with primes $2< p<q<r$. Then:
\begin{itemize}
\item we have
\begin{equation*}
\max\left\{p-1,\frac{2n}{p}-\deg \Psi_n\right\}\le g(\Psi_n)<2n \(\frac{1}{p}+\frac{1}{q}+\frac{1}{r}\)-\deg (\Psi_n);
\end{equation*}
\item if $n \not \in \cR_3$, then $g(\Psi_n)=2n/p-\deg \Psi_n$.
\end{itemize}
\end{lemma}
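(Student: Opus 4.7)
The starting point is the closed-form expression
\begin{equation*}
\Psi_{pqr}(Z) = -\frac{(1-Z)(1-Z^{pq})(1-Z^{pr})(1-Z^{qr})}{(1-Z^p)(1-Z^q)(1-Z^r)},
\end{equation*}
obtained from~\eqref{prod1}. Introducing the representation function $N(k) = \#\{(a,b,c)\in\Z_{\ge 0}^3 : ap+bq+cr = k\}$ of the numerical semigroup $\langle p,q,r\rangle$ and expanding the denominator as a formal power series, the coefficient $c_k$ of $Z^k$ in $\Psi_{pqr}$ admits the explicit formula
\begin{equation*}
c_k = -\sum_{S\subseteq\{1,pq,pr,qr\}} (-1)^{|S|}\, N(k - s_S),\qquad s_S=\sum_{t\in S}t,
\end{equation*}
with $N(j)=0$ for $j<0$. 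A companion tool is the antisymmetry $c_{\deg\Psi_n - k} = -c_k$, which follows from $Z^{\deg\Psi_n}\Psi_n(1/Z) = -\Psi_n(Z)$ and allows one to restrict attention to the lower half of the support.

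\emph{Lower bounds.} Since $N(0)=N(p)=1$ and $N(k)=0$ for $0<k<p$, a direct evaluation of the formula above gives $c_0=-1$, $c_1=1$, $c_k=0$ for $2\le k\le p-1$, and $c_p=-1$, producing a gap of length $p-1$ between the exponents $1$ and $p$. For the bound $g(\Psi_n)\ge 2n/p-\deg\Psi_n$, the plan is to exhibit a symmetric gap around $\deg\Psi_n/2$: namely to show that $c_{qr}\ne 0$ and $c_k=0$ for $\deg\Psi_n-qr<k<qr$. The antisymmetry then supplies the matching nonzero coefficient at $k=\deg\Psi_n-qr$, yielding a gap of length exactly $2qr-\deg\Psi_n$. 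The vanishing of $c_k$ in the interior is forced by the regular behavior of $N$ on residue classes mod $p$ that do not yet see the generator $qr$, while $c_{qr}\ne 0$ reflects the fact that $qr$ is the smallest element of $\langle p,q,r\rangle$ lying outside $\langle p,q\rangle\cup\langle p,r\rangle$.

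\emph{Upper bound.} The right-hand side $2n(1/p+1/q+1/r) - \deg\Psi_n$ simplifies to $pq+pr+qr+p+q+r-1$, which matches (up to a shift by $1$) the diameter of the set of shifts $\{s_S\}$ entering the sixteen-term formula for $c_k$. The plan is a pigeonhole-style argument: on any window of length at least this quantity, the quasi-polynomial growth of $N$ (via $N(k+p)\ge N(k)$ and its analogues for $q,r$) prevents a simultaneous cancellation of all sixteen signed contributions $(-1)^{|S|}N(k-s_S)$, so at least one $c_k$ in the window must be nonzero.

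\emph{Exact value off $\cR_3$ --- the main obstacle.} The delicate step is to show that if $4(p-1)\le q$ or $p^2\le r$, the upper bound can be sharpened to $g(\Psi_n)= 2n/p-\deg\Psi_n$. The defining inequalities of the complement of $\cR_3$ are precisely the thresholds under which the Apéry set of $\langle p,q,r\rangle$ relative to $p$ becomes sufficiently spread out that no subinterval of $[0,\deg\Psi_n]$ longer than $2n/p-\deg\Psi_n$ can be free of nonzero coefficients. The plan is to split into the two cases $4(p-1)\le q$ and $p^2\le r$ and, in each, control the residues mod $p$ of the values $N(k-s_S)$ by a direct analysis of the Apéry structure, showing that the sixteen-term alternating sum is incompatible with a longer zero run. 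The combinatorial case analysis in this last step, which must track how many of the shifts $s_S$ fall into each relevant residue class, is where the bulk of the difficulty concentrates.
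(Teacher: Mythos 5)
This lemma is not proved in the paper at all: it is quoted verbatim from Hong, Lee, Lee and Park (\cite[Theorems~2 and~3]{HLLP}), so there is no in-paper argument to compare yours against. Judged on its own, your write-up is a reasonable \emph{outline} with a correct setup --- the product formula for $\Psi_{pqr}$, the sixteen-term expression $c_k=-\sum_S(-1)^{|S|}N(k-s_S)$ via the representation function of $\langle p,q,r\rangle$, and the antisymmetry $c_{\deg\Psi_n-k}=-c_k$ are all right, and your computation of $c_0,\dots,c_p$ does rigorously establish the gap of length $p-1$. But everything beyond that is announced as a ``plan'' rather than carried out: the vanishing of $c_k$ on $(\deg\Psi_n-qr,\,qr)$ together with $c_{qr}\ne 0$, the pigeonhole argument for the upper bound, and above all the exact evaluation $g(\Psi_n)=2n/p-\deg\Psi_n$ off $\cR_3$ (which you yourself flag as carrying ``the bulk of the difficulty'') are precisely the content of the lemma, and none of them is proved. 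As it stands this is a proof strategy, not a proof.

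Two of the supporting claims in the sketch are also not sound as stated. First, the assertion that $c_{qr}\ne 0$ ``reflects the fact that $qr$ is the smallest element of $\langle p,q,r\rangle$ lying outside $\langle p,q\rangle\cup\langle p,r\rangle$'' is false in general: for $p=5$, $q=7$, $r=11$ the element $q+r=18$ lies in $\langle p,q,r\rangle$ but in neither $\langle 5,7\rangle$ nor $\langle 5,11\rangle$, and $18<77=qr$. Whatever forces $c_{qr}\ne 0$, it is not this. Second, the motivating remark for the upper bound is off: the right-hand side equals $pq+pr+qr+p+q+r-1$, whereas the diameter of the shift set $\{s_S\}$ is $1+pq+pr+qr$; these differ by $p+q+r-2$, not by $1$, so the ``window matches the diameter'' heuristic does not actually line up and the pigeonhole step would need a genuinely different quantitative input. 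If you want to complete this, the natural course is to work out the Ap\'ery-set case analysis you describe in the last paragraph in full --- or simply cite \cite{HLLP}, as the paper does.
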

Note that $\cR_3$ consists of ternary integers only.
Although in this work we are mostly interested in the set $\cR_3$ that appears in
Lemma~\ref{lem:HLLP-3}, we first make some comments concerning~\eqref{eq:pq}.

Moree~\cite{Mor2} derives~\eqref{eq:pq} using a very different 
technique which is based on numerical semigroups. We come back to this in 
Section~\ref{numericalsemi}.

Here we present yet another short proof of~\eqref{eq:pq} communicated to us 
by Nathan Kaplan. We start by noticing that the nonzero coefficients of
$\Phi_{pq}(Z)$ alternate between $1$ and $-1$, see~\cite{Carlitz,Mor2}.
From~\eqref{prod1} one easily obtains that $\Phi_p(Z)\Phi_{pq}(Z) =\Phi_p(Z^q)$.  Suppose there is a gap of 
length $p$ or greater, say, for some positive integers $b \ge a+p$ and there are no nonzero coefficients between $Z^a$ and $Z^b$.  One of these
coefficients is $1$ and the other is $-1$. Now consider the product
$\Phi_p(Z)\Phi_{pq}(Z)$ and examine  at the 
coefficients of the terms with
$Z^{a+p-1}$ and $Z^b$. 
One of these coefficients is $1$ and the other is $-1$, contradicting the fact
that $\Phi_p(Z^q)$ has no coefficient equal to $-1$ and~\eqref{eq:pq}
follows. 

\subsection{Main result}
We now estimate $\cR_3(x)$.
We frequently make use of the bound
\begin{equation}
\label{eq:k}
\sum_{p<z} p^k\ll \frac{z^{k+1}}{\log z}, 
\end{equation}
which holds for any fixed $k\ge 1$ and real $z \ge 2$  and 
follows easily from~\eqref{simplepi} by partial summation. 
\begin{theorem}
\label{florian}
We have
$$
\#\cR_3(x)=\frac{c x}{(\log x)^2}+O\(\frac{x\log\log x}{(\log x)^3}\),
$$
where $c=(1+\log 4) \log 4=3.30811\ldots.$ 
\end{theorem}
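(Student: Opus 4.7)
The plan is to write
$$\#\cR_3(x) = \sum_{p} \sum_{\substack{p<q<4(p-1) \\ q \text{ prime}}} \bigl(\pi(M(p,q)) - \pi(q)\bigr)_+,$$
where $M(p,q) = \min(p^2-1,\lfloor x/(pq)\rfloor)$, apply the prime number theorem to replace the sums by integrals, and evaluate the result explicitly. The condition $p^2 \le x/(pq)$ is equivalent to $q \le x/p^3$, so for $p>x^{1/4}$ one has $M(p,q)=\lfloor x/(pq)\rfloor$ throughout the $q$-range; the summand is nonzero only when $q<\sqrt{x/p}$, which is automatic once $p \le (x/16)^{1/3}$ (because then $4p \le \sqrt{x/p}$). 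The main parameter regime is therefore $p \in (x^{1/4}, (x/16)^{1/3})$, $q \in (p, 4p)$.

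Contributions from outside this regime are lower-order. For $p \le x^{1/4}$, one has $\pi(p^2)-\pi(q) \ll p^2/\log p$ and the number of admissible $q$ is $\ll p/\log p$, so using \eqref{eq:k}
$$\sum_{p \le x^{1/4}} \frac{p^3}{(\log p)^2} \ll \frac{x}{(\log x)^3}.$$
The transitional strip $p \in ((x/16)^{1/3}, x^{1/3})$, where the $q$-range $(p,\sqrt{x/p})$ shrinks continuously to a point as $p\to x^{1/3}$, is handled by a parallel computation using the substitution $p = x^{1/3}/\tau$, $\tau \in (1, 4^{1/3})$, and also contributes $O(x/(\log x)^3)$.

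For the main regime, applying the PNT in the form $\pi(y)=y/\log y + O(y/(\log y)^2)$ to both the $r$-sum and the $q$-sum and setting $u = \log p$, $v = \log q$, $L = \log x$ yields
$$\#\cR_3(x) = x \int\!\!\int_{R} \frac{du\,dv}{uv\,(L-u-v)} + O\!\left(\frac{x\log\log x}{(\log x)^3}\right),$$
with $R$ essentially equal to $\{L/4 < u < L/3,\ u<v<u+\log 4\}$ (the $O(1)$-corrections to the endpoints shift the result by $O(x/(\log x)^3)$ only). Since the $v$-strip has fixed width $\log 4$, the substitution $u = sL$ collapses the integral to
$$\frac{x\log 4}{L^{2}}\int_{1/4}^{1/3}\frac{ds}{s^{2}(1-2s)}.$$
Partial fractions give $\tfrac{1}{s^{2}(1-2s)} = \tfrac{2}{s}+\tfrac{1}{s^{2}}+\tfrac{4}{1-2s}$, so the antiderivative is $2\log\bigl(s/(1-2s)\bigr)-1/s$; evaluating from $1/4$ to $1/3$ gives $-3-(-2\log 2-4) = 1+\log 4$. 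Therefore the main term is $(1+\log 4)(\log 4)\,x/(\log x)^2 = cx/(\log x)^2$, as claimed.

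The principal obstacle is rigorously tracking the error terms from the double application of the PNT and the passage from prime sums to integrals, particularly the Mertens-type correction to $\sum_{q \in (p,4p)} 1/q = \log 4/\log p + O(1/\log p)$; propagated through the $q$- and $p$-summations, these secondary contributions (together with possible accumulation near the integrable boundary singularities at $s=1/4$ and $s=1/3$) are what should yield the stated $x\log\log x/(\log x)^3$ error rather than the naive $x/(\log x)^3$.
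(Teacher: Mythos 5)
Your proposal is correct and follows essentially the same route as the paper: isolate the dominant regime $p\in(x^{1/4},x^{1/3})$, $q\in(p,4p)$, dispose of the complementary ranges as $O(x/(\log x)^3)$ via \eqref{eq:k}, and reduce the main term by the prime number theorem and Mertens' formula to $(\log 4)\,x(\log x)^{-2}\int_{1/4}^{1/3}s^{-2}(1-2s)^{-1}\,ds$, with the same evaluation $1+\log 4$ of that integral. The remaining differences are bookkeeping: the paper first discards $n\le x/(\log x)^3$, which is precisely what shifts its lower integration limit to $z(x)=\tfrac14+O(\log\log x/\log x)$ and produces the $\log\log x$ in the error term, whereas your sharp cut at $p=x^{1/4}$ (whose contribution for $p\le x^{1/4}$ is genuinely $O(x/(\log x)^3)$) would in fact deliver the stronger error $O(x/(\log x)^3)$ --- so your closing concern about where the $\log\log x$ must originate is unfounded, and the only slip is that the transitional parameter should run over $\tau\in(1,16^{1/3})$ rather than $(1,4^{1/3})$.
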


\begin{proof}
We take $\cS(x)=\{n:n\le x/(\log x)^3\}$. Clearly, 
$\# \cS(x)\le x(\log x)^{-3}$.
So, from now on, we only consider 
$$
n\in \cL_3(x) =  \cR_3(x)\setminus \cS(x).
$$ 

Now, for  $n =pqr \in \cL_3(x)$ we have 
$$
p^3<pqr\le x \mand 4p^4=p^2 (4p) p>rqp=n>\frac{x}{(\log x)^3}.$$
Thus, 
$$
p\in \cI(x)=[y(x),x^{1/3}],
$$
where 
$$
y(x) = \frac{x^{1/4}}{\sqrt{2}(\log x)^{3/4}}.
$$
In particular, $\log p\asymp \log q\asymp \log r\asymp \log x$. 

We now fix a prime $p \in \cI(x)$ and consider the interval 
$$
\cJ_p=(p,4(p-1)).
$$  
If  $n =pqr \in \cL_3(x)$, 
then $q\in \cJ_p$.  For fixed $p$ and $q\in \cJ_p$, we have 
$r<\min\{p^2,x/pq\}$.  We distinguish the following two cases depending of 
whether $\min\{p^2,x/pq\} = p^2$, and denote by $\cU_{3}(x)$ the set of such $n \in \cL_3(x)$, 
or  $\min\{p^2,x/pq\} = x/pq$,  and denote by $\cV_{3}(x)$ the set of such $n \in \cL_3(x)$.

We estimate the cardinalities of the sets $\cU_{3}(x)$  and $\cV_{3}(x)$ separately.

First, assume that $n = pqr \in \cU_3(x)$. Therefore $p^4<p^2(pq)<x$, so in fact $p<x^{1/4}$. For fixed $p$ and $q$, the number of primes $r<p^2$ is $O(\pi(p^2))=O(p^2/\log x)$. 
Further, the number of  choices of $q<4p$ is $O(\pi(4p))=O(p/\log x)$. Thus, for fixed $p\in  \cI(x)$, the number of choices for the 
pairs $(q,r)$ is 
$$
O\( \frac{p^2}{\log x} \times \frac{p}{\log x}\)=O\(\frac{p^3}{(\log x)^2}\).
$$ 

Summing up the above bound over all the choices $p\le x^{1/4}$ and using~\eqref{eq:k} with $k=3$ we derive
\begin{equation}
\label{eq:U3}
\# \cU_{3}(x) \ll \frac{1}{(\log x)^2} \sum_{p<x^{1/4}} p^3 \ll \frac{x}{(\log x)^3},
\end{equation}
which gets absorbed in the error term of the desired asymptotic formula for $\cR_3(x)$.  

So we now consider $n = pqr \in \cV_3(x)$. In this case,
 using~\eqref{simplepi}
and $\log (x/pq)>\log q \asymp \log x$, we see that  the number of choices for  the prime $r\in (q, x/pq)$ when $p$ and $q$ are fixed is 
\begin{equation}
\label{eq:2}
\pi\(\frac{x}{pq}\)-\pi(q)=\frac{x}{pq\log(x/pq)}+O\(\frac{x}{pq(\log x)^2}+\frac{q}{\log q}\).
\end{equation}

Note that 
$$
\log(x/pq)=\log x-\log p-\log q=\log x-2\log p+O(1)
$$
for  $q\in \cJ_p$. Thus, using  $(1+z)^{-1}=1+O(z)$ and 
$\log (x/p^2)\asymp \log x$, a simple calculation shows that 
$$
\frac{1}{\log(x/pq)}  = \frac{1}{\log(x/p^2)}+O\(\frac{1}{(\log x)^2}\). 
$$
%\begin{equation*}
%\begin{split}
%\frac{1}{\log(x/pq)} & =   \frac{1}{\log x-2\log p+O(1)}\\
%& =  \frac{1}{\log x-2\log p} \(\frac{1}{1+O\({1}/{\log x}\)}\)\\
%& =   \frac{1}{\log(x/p^2)} \(1+O\(\frac{1}{\log x}\)\) \\
%& = \frac{1}{\log(x/p^2)}+O\(\frac{1}{(\log x)^2}\),
% \end{split}
%\end{equation*}
Hence, using~\eqref{eq:2}, we see that the number of choices for $r$ when $p$ and $q$ are fixed is
\begin{equation}
\label{eq:3}
\frac{x}{pq\log(x/p^2)}+O\(\frac{x}{pq (\log x)^2}+\frac{q}{\log x}\).
\end{equation}
Now we sum up over $q\in (p,4(p-1))$ and use the Mertens formula
 (see~\cite[Equation~(2.15)]{IwKow})  in the form 
\begin{equation}
\label{eq:4}
\sum_{\substack{\ell<X\\\ell~\text{prime}}} \frac{1}{\ell} =\log\log X+\alpha+O\(\frac{1}{(\log X)^2}\), 
\qquad X \to \infty, 
\end{equation}
with some constant $\alpha$ to deduce that
\begin{equation}
\label{eq:Mert}
\begin{split}
\sum_{p<q<4(p-1)}  \frac{1}{q}   &=  \log\log 4p-\log\log p+O\(\frac{1}{(\log p)^2}\) \\
 & =  \frac{\log 4}{\log p} +O\(\frac{1}{(\log x)^2}\).
 \end{split}
\end{equation}
Summing over the choices $q\in \cJ_p$, we see that the main term in~\eqref{eq:3}
contributes 
$$
\frac{\log 4}{\log p \log(x/p^2)}+O\(\frac{x}{p(\log x)^3}\).
$$
We now sum up the first error terms in~\eqref{eq:3}, getting 
$$
\sum_{p<q<4(p-1)} \frac{x}{pq(\log x)^2}  \ll \frac{x}{p(\log x)^2} \sum_{p<q<4p}\frac{1}{q} \ll \frac{x}{p(\log x)^3},
$$
where we have used~\eqref{eq:Mert}, and the second error terms in~\eqref{eq:3} getting
$$
\sum_{p<q<4(p-1)} \frac{q}{\log x} \ll \frac{1}{\log x} \sum_{q<4p} q \ll \frac{p^2}{(\log x)^2},
$$
where we have used~\eqref{eq:k} with $k=1$ and $z=4p$. 

Hence, the number of choices for the pair $(q,r)$ when $p\in \cI(x)$ is
$$
\frac{x \log 4 }{p \log p \log(x/p^2)} +O\(\frac{x}{p(\log x)^3}+\frac{p^2}{(\log x)^2}\).
$$
Now we sum over the primes $p\in \cI(x)$. Since $y(x)>x^{1/5}$ for large $x$, for the total error term we 
obtain 
\begin{equation*}
\begin{split}
\sum_{p\in \cI(x)}  \(\frac{x}{p(\log x)^3}+\frac{p^2}{(\log x)^2}\)
& \le \frac{x}{(\log x)^3} \sum_{x^{1/5}<p<x}\frac{1}{p}+
 \frac{1}{(\log x)^2} \sum_{p<x^{1/3}} p^2\\& \ll \frac{x}{(\log x)^3}
 \end{split}
\end{equation*}
by~\eqref{eq:4} (applied to the sum of $1/p$) and ~\eqref{eq:k}
(applied to the sum of $p^2$). Thus, 
$$
\# \cV_3(x) =  \sum_{p\in \cI(x)} \frac{x \log 4}{p\log p \log(x/p^2)} +O\(\frac{x}{(\log x)^3}\).
$$

For the sum of the main term (after we pull out $x \log 4$), using the  Stieltjes integral and then partial integration, we obtain 
\begin{equation*}
\begin{split}
\sum_{p\in \cI(x)}  \frac{1}{p\log p\log(x/p^2)}&=
\int_{y(x)}^{x^{1/3}} \frac{ d\pi (t)}{t \log t \log(x/t^2)}
\\& =   \int_{y(x)}^{x^{1/3}} \frac{d \pi(t)}{t \log t\log(x/t^2)}=\frac{\pi(t)}{t \log t\log(x/t^2)}\Bigr|_{t=y(x)}^{t=x^{1/3}} \\&-\int_{y(x)}^{x} \pi(t) \frac{d}{dt}  
\(\frac{1}{t\log t\log(x/t^2)}\).
 \end{split}
\end{equation*}
Since $\pi (t) /t=O(1/\log t)$, we see that the first term above is 
$$
\frac{\pi(t)}{t\log t\log(x/t^2)}\Bigr|_{t=y(x)}^{t=x^{1/3}}=O\(\frac{1}{(\log x)^3}\).
$$
We have
\begin{equation*}
\begin{split}
\frac{d}{dt} & \(\frac{1}{t\log t\log(x/t^2)}\) =   - \frac{\log t \log (x/t^2)+\log (x/t^2)-2\log t}{t^2 (\log t)^2 (\log x-2\log t)^2}\\
& \qquad \qquad =  -\frac{1}{t^2 \log t (\log x-2\log t)}+O\(\frac{1}{t^2 (\log x)^3}\)\\
&\qquad \qquad  =  -\frac{1}{t(\log t)^2(\log x-2\log t)}+O\(\frac{1}{t\log t(\log x)^3}\).
 \end{split}
\end{equation*}
Thus, using~\eqref{simplepi}, we have 
\begin{equation*}
\begin{split}
\int_{y(x)}^{x} \pi(t)  \frac{d}{dt}  \(\frac{1}{t\log t \log(x/t^2)}\) dt 
 = & -  \int_{y(x)}^{x^{1/3}} \frac{dt}{t(\log t)^2( \log x-2\log t)} 
\\&+  O\(\int_{y(x)}^{x^{1/3}} \frac{dt}{t(\log t) (\log x)^3}\).
 \end{split}
\end{equation*}
Since 
$$
 \int_{y(x)}^{x^{1/3}} \frac{dt}{t\log t} = \log\log t\Bigr|_{t=y(x)}^{t=x^{1/3}} \ll 1,
 $$
we derive
\begin{equation}
\begin{split}
\label{eq:5}
\#\cV_3(x)=  x \log 4 \int_{y(x)}^{x^{1/3}} \frac{dt}{t(\log t)^2( \log x-2\log t)}
+O\(\frac{x}{(\log x)^3}\).
 \end{split}
\end{equation}
Inside the integral, we make the change of variable $t=x^ u$, for which $dt= x^u \log x du$. Further, 
we have $\log t=u\log x$ and we also have $\log x-2\log t=(\log x)(1-2u)$. Defining 
$$
z(x) =\frac{\log y(x)}{\log x}=\frac{1}{4}-\frac{\log(2^{1/2} (\log x)^{3/4})}{\log x}=\frac{1}{4}+O\(\frac{\log\log x}{\log x}\),
$$
we see  that
\begin{equation}
\label{eq:6}
 \int_{y(x)}^{x^{1/3}} \frac{dt}{t(\log t)^2( \log x-2\log t)} =  \frac{1}{(\log x)^2} \int_{z}^{1/3} \frac{du}{u^2(1-2u)}.
\end{equation}
We now write
\begin{equation}
\begin{split}
\label{eq:7}
 \int_{z}^{1/3} \frac{du}{u^2(1-2u)} & = \int_{1/4}^{1/3} \frac{du}{u^2(1-2u)} +\int_{z}^{1/4} \frac{du}{u^2(1-2u)} \\
 & =    \int_{1/4}^{1/3} \frac{du}{u^2(1-2u)} +O\(\frac{\log\log x}{\log x}\).
 \end{split}
\end{equation}
 One verifies that 
 $$
 \frac{d}{du}\(- 2 \log(\frac{1}{u} -2)  - \frac{1}{u}\) =  \frac{1}{u^2(1-2u)}.
 $$
 Hence,
\begin{equation}
\label{eq:8}
  \int_{1/4}^{1/3} \frac{du}{u^2(1-2u)} 
  = - \(2 \log(\frac{1}{u} -2)  + \frac{1}{u}\)\Bigr|_{u=1/3}^{u=1/4} = 1 + \log 4. 
\end{equation}
Substituting~\eqref{eq:7} and~\eqref{eq:8} in~\eqref{eq:6} and recalling~\eqref{eq:5}
we  obtain 
 \begin{equation}
\label{eq:V3}
\#\cV_3(x)=  \frac{x (1+\log 4)\log 4}{(\log x)^2}
+O\(\frac{x\log\log x}{(\log x)^3}\).
\end{equation}

Now, combining~\eqref{eq:U3} and~\eqref{eq:V3}, we 
conclude the proof.
\end{proof}

Let 
$$
\cQ_3(x) =   \{n \le x~:~n = pqr \ \text{for primes}\ 2<p<q<r\}.
$$
Using~\cite[Theorem~4, Section~II.6.1]{Ten} we conclude that
\begin{equation}
\label{asgelijk} 
\# \cQ_3(x) = (1+ o(1)) \frac{x (\log \log x)^2}{2\log x}
\end{equation}
as $x \to \infty$. Combining this estimate with Theorem~\ref{florian}, 
we immediately derive the following comparison of 
$\# \cR_3(x)$ and $\# \cQ_3(x)$.
\begin{cor}
\label{precisie}
As $x \to \infty$ we have 
$$\# \cR_3(x)  =   \frac{2(1+\log 4) \log 4 + o(1)}{(\log x) (\log \log x)^2 } \# \cQ_3(x).$$
\end{cor}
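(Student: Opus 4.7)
The corollary is a direct consequence of combining Theorem~\ref{florian} with the asymptotic formula~\eqref{asgelijk}, so the plan is essentially to divide one by the other and track the error terms.

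First, I would rewrite Theorem~\ref{florian} in the multiplicative form
$$
\#\cR_3(x) = \frac{x}{(\log x)^2}\left(c + O\!\left(\frac{\log\log x}{\log x}\right)\right) = (c+o(1))\,\frac{x}{(\log x)^2},
$$
where $c=(1+\log 4)\log 4$. This just amounts to factoring $x/(\log x)^2$ out of the error term $O(x\log\log x/(\log x)^3)$.

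Next, I would invert~\eqref{asgelijk} to solve for $x/\log x$, obtaining
$$
\frac{x}{\log x} = (1+o(1))\,\frac{2\,\#\cQ_3(x)}{(\log\log x)^2}\qquad(x\to\infty),
$$
which is valid because the leading coefficient there is nonzero.

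Finally, I would write $\#\cR_3(x) = (c+o(1))\cdot\frac{1}{\log x}\cdot\frac{x}{\log x}$ and substitute the previous display, yielding
$$
\#\cR_3(x) = (c+o(1))(1+o(1))\,\frac{2\,\#\cQ_3(x)}{(\log x)(\log\log x)^2} = \frac{2c+o(1)}{(\log x)(\log\log x)^2}\,\#\cQ_3(x),
$$
and since $2c=2(1+\log 4)\log 4$ this is the claimed identity. There is essentially no obstacle here beyond bookkeeping: the only point that deserves a moment of care is confirming that the product of two $(1+o(1))$ factors is again $1+o(1)$, which is immediate.
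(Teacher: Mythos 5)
Your proposal is correct and is exactly the paper's argument: the authors likewise just combine Theorem~\ref{florian} with the asymptotic~\eqref{asgelijk} for $\#\cQ_3(x)$ and state that the corollary follows immediately; you have merely written out the routine division and the absorption of the error terms into $o(1)$. Nothing further is needed.
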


Corollary~\ref{precisie} justifies the claim in~\cite[Remark~1]{HLLP} 
that $\cR_3(x)$ is a sparse set, that is, that 
$\# \cR_3(x) = o(\# \cQ_3(x))$ as $x \to \infty$.
Indeed, this claim can also be justified with less effort.
Namely, it is not difficult to show that
\begin{equation}
\label{easier}
\# \cR_3(x) \ll    \frac{\# \cQ_3(x)}{(\log x) (\log \log x)^2 }.
\end{equation}
To do so, we note that if $n = pqr \in \cR_3(x)$ then 
$$
p \le (pqr)^{1/3}= n^{1/3} \le x^{1/3}.
$$
Hence, 
$$\# \cR_3(x) \le \sum_{p\le x^{1/3}} \sum_{p< q < 4(p-1)} \min\{\pi(\frac{x}{pq}), \pi(p^2)\}.$$
After making some easy estimates one then arrives at~\eqref{easier}. 

Lemma~\ref{lem:HLLP-3} also motivates us to study  the set $\cE_3$ of exceptional 
ternary integers $n=pqr$, $2<p<q<r$, for which $g(\Psi_n)\ne  2n/p-\deg \Psi_n$. 
We  now
consider a certain subset $\cE_4$ of $\cE_3$ and estimate 
$\# \cE_4(x)$ (Theorem~\ref{E4}). The relevance of this
estimate  becomes clear in the proof of Theorem~\ref{gnoteq} below.

\begin{lemma}
\label{acht}
Let 
$$
\cE_3=\{n=pqr~:~2<p<q<r~{\text{\rm and}}~ g(\Psi_n)\ne  2n/p-\deg \Psi_n. \}.
$$
and
$$
\cE_4=\{n=pqr~:~2<p<q<r~{\text{\rm and}}~ qr<(q+r)(p-1)\}.
$$
We have $\cE_4 \subseteq \cE_3\subseteq \cR_3\subseteq {\mathcal Q}_3$.
\end{lemma}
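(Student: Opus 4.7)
The plan is to establish each of the three inclusions in turn, with the real content being the middle one $\cE_4 \subseteq \cE_3$, which reduces to a short algebraic manipulation using Lemma~\ref{lem:HLLP-3}.

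\textbf{Inclusion $\cR_3 \subseteq \cQ_3$.} This is an immediate set-theoretic check: if $n = pqr \in \cR_3$ then the constraints $p < q$ and $4(p-1) > q$ force $p \ge 3$, since $p=2$ would require $q=3$ and then $r > 3$ with $r < p^2 = 4$, which is impossible. Hence $2 < p < q < r$ and $n \in \cQ_3$.

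\textbf{Inclusion $\cE_3 \subseteq \cR_3$.} This is just the contrapositive of the second bullet of Lemma~\ref{lem:HLLP-3}: if $n = pqr$ with $2 < p < q < r$ lies outside $\cR_3$, then $g(\Psi_n) = 2n/p - \deg \Psi_n$, so $n \notin \cE_3$.

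\textbf{Inclusion $\cE_4 \subseteq \cE_3$.} This is the only part requiring computation. Using
$$
\deg\Psi_n = n - \varphi(n) = pqr - (p-1)(q-1)(r-1) \mand \frac{2n}{p} = 2qr,
$$
I expand and simplify to get
$$
\frac{2n}{p} - \deg\Psi_n = qr - pq - pr + p + q + r - 1.
$$
Therefore the inequality $p-1 > 2n/p - \deg \Psi_n$ is equivalent to
$$
(p-1)(q+r) > qr,
$$
which is precisely the defining condition of $\cE_4$. Combining this with the lower bound $g(\Psi_n) \ge p - 1$ from the first bullet of Lemma~\ref{lem:HLLP-3}, we conclude that for $n \in \cE_4$,
$$
g(\Psi_n) \ge p - 1 > \frac{2n}{p} - \deg\Psi_n,
$$
so $g(\Psi_n) \neq 2n/p - \deg\Psi_n$ and hence $n \in \cE_3$.

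There is no genuine obstacle here; the only thing to be careful about is the algebraic expansion of $\deg\Psi_n$ and keeping track of signs when rearranging $p-1 > qr - pq - pr + p + q + r - 1$ into $(p-1)(q+r) > qr$. Once this identity is in hand, the three inclusions chain together trivially via Lemma~\ref{lem:HLLP-3}.
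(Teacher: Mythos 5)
Your proof is correct and follows essentially the same route as the paper: the same expansion $2n/p-\deg\Psi_n=2qr-pqr+(p-1)(q-1)(r-1)$ showing that $p-1>2n/p-\deg\Psi_n$ is equivalent to $qr<(q+r)(p-1)$, combined with both bullets of Lemma~\ref{lem:HLLP-3} to get $\cE_4\subseteq\cE_3\subseteq\cR_3$. The only difference is that you spell out the (genuinely easy) verification that $p=2$ is impossible in $\cR_3$, which the paper dismisses as obvious.
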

\begin{proof}
Consider the inequality $p-1>2n/p-\deg\Psi_n$.  As 
$$2n/p-\deg\Psi_n=2qr-pqr+(p-1)(q-1)(r-1),$$ this is equivalent to
\begin{equation}
\label{primefun} 
qr<(q+r)(p-1).
\end{equation}
By Lemma~\ref{lem:HLLP-3}, it now follows that 
$\cE_4 \subseteq \cE_3\subseteq \cR_3$.
The final inclusion is obvious.
\end{proof}
\begin{theorem}
\label{E4}
For the set
$$
\cE_4=\{n=pqr~:~2<p<q<r~{\text{\rm and}}~ qr<(q+r)(p-1)\}
$$
we have
$$
\frac{x}{(\log x)^3} \ll \#\cE_4(x)\ll \frac{x}{(\log x)^3}.
$$
\end{theorem}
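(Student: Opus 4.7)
The condition $qr < (q+r)(p-1)$ rearranges to $1/q + 1/r > 1/(p-1)$, which forces $q < 2(p-1)$ (else $1/q+1/r \leq 2/q \leq 1/(p-1)$) and, for given $q$, $r < q(p-1)/(q-p+1)$. Writing $s = q-p+1 \in [2,p-2]$ and combining with $pqr \leq x$, the admissible primes $r$ lie in $(q, M(p,s)]$ with
$$
M(p,s) = \min\{q(p-1)/s,\; x/(pq)\},
$$
and the two arguments of the minimum coincide at $s = s^* \asymp p^4/x$. Since $p<q<r$, we have $p \le x^{1/3}$; the contribution from $p \leq x^{1/4}$ is bounded trivially by $\sum_{p \leq x^{1/4}} \pi(p^2)\pi(2p) \ll x^{3/4}/(\log x)^2$ via~\eqref{eq:k}, which is negligible compared to $x/(\log x)^3$.

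\textbf{Upper bound.} For $x^{1/4} < p \leq x^{1/3}$, set $\alpha = \log p/\log x \in (1/4, 1/3]$. The prime number theorem~\eqref{simplepi} gives density $1/\log p$ for primes $q = p + s - 1$ and the bound $\pi(M)-\pi(q) \ll M/\log M$ for the number of primes $r \in (q,M]$. Splitting the resulting integral at $s = s^*$ yields
$$
N_p(x) \;\ll\; \frac{p^2}{(1-2\alpha)\log p\, \log x} \;+\; \frac{p^2}{\log p}\log\!\left(\frac{\log(x/p^2)}{\log p}\right).
$$
Summing over primes $p$ by partial summation and substituting $p = x^\alpha$, each summand becomes an integral of the form $(1/\log x)\int_{1/4}^{1/3} x^{3\alpha} G(\alpha)\,d\alpha$ with $G$ bounded. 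In the first summand $G(\alpha) = 1/((1-2\alpha)\alpha^2)$ is bounded on $(1/4,1/3)$, contributing $O(x/(\log x)^3)$. In the second $G(\alpha) = \log((1-2\alpha)/\alpha)/\alpha^2$, which vanishes linearly as $\alpha \to 1/3$ (namely $\log((1-2\alpha)/\alpha) \sim 9(1/3-\alpha)$). A Laplace-type estimate at the endpoint $\alpha = 1/3$, where $x^{3\alpha}$ is maximised, produces an integral of size $\asymp x/(\log x)^2$; dividing by the outer $\log x$ gives $O(x/(\log x)^3)$ and hence $\#\cE_4(x) \ll x/(\log x)^3$.

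\textbf{Lower bound and main obstacle.} For the matching lower bound, fix $P = x^{1/3}/4$ and count triples with $p$ prime in $[P, 2P]$, $q$ prime in $[11p/10,\, 6p/5]$, and $r$ prime in $[q,\, 3p/2]$. A direct check gives $pqr \leq 14.4\,P^3 < x$ and, in the worst case $q = 6p/5,\, r = 3p/2$, that $qr = 9p^2/5 < 11p(p-1)/5 \leq (q+r)(p-1)$ for every prime $p \geq 7$. Each of the three prime ranges contributes $\gg P/\log P$ primes by~\eqref{simplepi}, so the total count is $\gg P^3/(\log P)^3 \gg x/(\log x)^3$. The principal obstacle lies on the upper-bound side: a naive envelope yields only $O(x/(\log x)^2)$, because $N_p(x) \asymp p^2/\log p$ and $\sum_{p \le x^{1/3}} p^2/\log p \asymp x/(\log x)^2$. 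The extra logarithm must be extracted from the geometric fact that the region of valid $(q,r)$ collapses as $p$ approaches $x^{1/3}$; this manifests analytically as the vanishing factor $(1-3\alpha)$ (equivalently $\log((1-2\alpha)/\alpha)$) in the outer integrand and delivers the required $1/\log x$ saving via a Laplace estimate at the endpoint.
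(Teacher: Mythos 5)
Your lower bound is correct and is essentially the paper's argument: the paper takes $q\in(p,p(1+\varepsilon))$ and $r\in(q,q(1+\varepsilon))$ for a small fixed $\varepsilon$, while your fixed ranges $[11p/10,6p/5]$ and $[q,3p/2]$ do the same job with explicit constants. Your upper bound, however, takes a genuinely different route. The paper fixes the difference $h=q-p+1$ and sums over $p$ for which $p$ and $p+h-1$ are \emph{both} prime, importing the upper-bound sieve estimate $\#\left(\cP_h\cap[1,t]\right)\ll t(\log t)^{-2}\sigma(h-1)/(h-1)$; the third logarithm there comes entirely from this prime-pair correlation. You instead fix $p$, sum over $q$, and extract the third logarithm from the collapse of the admissible $(q,r)$-region as $p$ approaches $x^{1/3}$, via a Laplace estimate at the endpoint $\alpha=1/3$. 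That mechanism is sound --- the endpoint computation does deliver $O(x/(\log x)^3)$ --- and it avoids prime-pair sieves altogether, which is a real structural difference.

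There is, however, a gap in how you justify the key bound on $N_p(x)$. You assert that the prime number theorem \eqref{simplepi} ``gives density $1/\log p$ for primes $q=p+s-1$'', and you use this both to count primes $q\in(p,p+s^*]$ and to evaluate the harmonically weighted sum over $s>s^*$. For an \emph{upper} bound this is not legitimate: these quantities depend on primes in intervals of length down to $s^*\asymp p^4/x$, which for $p$ just above $x^{1/4}$ is far shorter than \eqref{simplepi} can resolve (its error term $O(p/(\log p)^2)$ swamps the count in any interval of length $o(p/\log p)$), and the sum $\sum_s s^{-1}$ is dominated by the primes $q$ closest to $p$, where clustering is not excluded by the PNT. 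Without a substitute input, the trivial bound $\sum_{q}1/(q-p+1)\ll\log p$ only yields $\#\cE_4(x)\ll x/(\log x)^2$ --- exactly the naive envelope you set out to beat. The repair is standard: apply the Brun--Titchmarsh inequality $\pi(y+H)-\pi(y)\ll H/\log H$ dyadically in $s$. This gives $\#\{q\ \text{prime}:p<q\le p+s^*\}\ll s^*/\log s^*$ and $\sum_{s>s^*,\ p+s-1\ \text{prime}}s^{-1}\ll\log(\log p/\log s^*)=\log(\alpha/(4\alpha-1))$, which still vanishes linearly at $\alpha=1/3$, so your Laplace analysis survives essentially verbatim; the ranges $p\le x^{1/4+\delta}$, where $\log s^*\not\asymp\log x$, need a separate but easy treatment. (A small slip besides: your bound for the range $p\le x^{1/4}$ should read $x/(\log x)^3$ rather than $x^{3/4}/(\log x)^2$ --- still acceptable for the theorem, but not ``negligible''.)
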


\begin{proof} Note that replacing $p>2$ in the definition of $\cE_4$ by $p\ge 2$ does
not change the set.

\subsection*{Upper bound} Let $n\in \cE_4(x)$. We may assume that $n>x/(\log x)^3$, since the set $\{n~:~n\le x/(\log x)^3\}$ has cardinality at most $x/(\log x)^3$.
By Lemma~\ref{acht} we have $\cE_4(x)\subseteq \cR_3(x)$. Thus, writing
$n=pqr$ with $p<q<r$, we have 
$$
4p^4=p (4p) (p^2)>pqr>\frac{x}{(\log x)^3},
$$
therefore $p>x^{1/5}$ for large values of $x$. The inequality defining $\cE_4$ 
is equivalent to 
$$
\(\frac{q}{p-1}\)\(\frac{r}{p-1}\)<\frac{q}{p-1}+\frac{r}{p-1},
$$
or
\begin{equation}
\label{FLeq:1}
\frac{r}{p-1}<\frac{q/(p-1)}{q/(p-1)-1}=\frac{q}{q-p+1}.
\end{equation}
Since $r>q$, we have that $r/(p-1)>q/(p-1)$, giving 
$$
\frac{q}{p-1}<\frac{q/(p-1)}{q/(p-1)-1},
$$
which leads to $q<2(p-1)$. We put $h=q-(p-1)$, so $h<p-1$. Then~\eqref{FLeq:1} is equivalent to
$$
\frac{r}{p-1}<\frac{q}{q-p+1}=\frac{p-1+h}{h}
$$
so
$$
 r<\frac{(p+h-1)(p-1)}{h}.
$$
We also have $r<x/pq$, so we get
\begin{equation}
\label{eq:Bound r}
r<\min\left\{\frac{x}{p(p+h-1)}, \frac{(p+h-1)(p-1)}{h}\right\}.
\end{equation}

We consider both possibilities that may occur on the right hand side of~\eqref{eq:Bound r}
separately. 

\subsubsection*{Case 1} Assume that 
$$
\frac{x}{p(p+h-1)}<\frac{(p+h-1)(p-1)}{h}.
$$
Then 
$$
xh<p(p-1)(p+h-1)^2<p^2 q^2<4 p^4,
$$
so $p>2^{-1/2}(xh)^{1/4}.$ Since $r>p>x^{1/5}$, the number of primes $r<x/pq$ is 
\begin{equation}
\label{FLeq:2}
\pi\(\frac{x}{pq}\) = O\(\frac{x}{pq (\log x)}\).
\end{equation}
We now fix $h$ and sum up the above bound over $p\in [2^{-1/2} (xh)^{1/4}, x^{1/3}]$ such that $p$ and $p+h-1$ are both primes. Let $\sigma(k)$ denote the sum of all integer positive divisors of $k\ge 1$.
By \cite[Theorem 7.3]{MN}, or alternatively by the fundamental lemma of the
sieve applied to the sequence $n(n+h-1)$, see~\cite[Corollary~2.4.1]{HR}, we see 
that the counting function of 
$$
{\mathcal P}_h=\{p: p,~p+h-1~{\text{\rm are~both~primes}}\}
$$
satisfies 
$$
\#\left({\mathcal P}_h\cap [1,t]\right))\ll \frac{t}{(\log t)^2}\prod_{p|(h-1)}\left(1+\frac{1}{p}\right)\ll
\frac{t}{(\log t)^2} \left(\frac{\sigma(h-1)}{h-1}\right),
$$
where we used the observation that
$$\prod_{p|(h-1)}\left(1+\frac{1}{p}\right)\le \sum_{d|(h-1)}\frac{1}{d}=\frac{\sigma(h-1)}{h-1}.$$
With the Abel summation formula, we derive that
\begin{equation*}
\begin{split}
\sum_{\substack{p>2^{-1/2} (xh)^{1/4}\\{p\in \cP_h}}} \frac{1}{pq} &  \le  \sum_{\substack{p>2^{-1/2} (xh)^{1/4}\\ p\in \cP_h}} \frac{1}{p^2} \\
& \ll \frac{1}{(\log x)^2} \(\frac{\sigma(h-1)}{h-1}\)\int_{2^{-1/2} (xh)^{1/4}}^{x^{1/3}} \frac{dt}{t^2}
 \end{split}
\end{equation*}
and hence
\begin{equation}
\label{FLeq:3}
\sum_{\substack{p>2^{-1/2} (xh)^{1/4}\\ p\in \cP_h}} \frac{1}{pq} 
\ll  \frac{1}{x^{1/4} (\log x)^2} \(\frac{\sigma(h-1)}{h-1}\) \frac{1}{h^{1/4}}.
\end{equation}
Thus, on combining~\eqref{FLeq:2} and~\eqref{FLeq:3}, we obtain that, for fixed $h$, the 
number of $n=pqr\in \cE_4(x)$ in  Case~1 is bounded by
\begin{equation}
\label{FLeq:4}
\sum_{\substack{p>2^{-1/2} (xh)^{1/4}\\ p\in \cP_h}}\pi\( \frac{x}{pq}\)=O\(\frac{x^{3/4}}{(\log x)^3} \(\frac{\sigma(h-1)}{h-1}\) \frac{1}{h^{1/4}}\).
\end{equation}
Note that, by the way it was defined, $h$ is odd and $h\ge 3.$ We now sum up over such $h$ and use 
that
$$\sum_{n\le x}\frac{\sigma(n)}{n}=\sum_{n\le x}\sum_{d|n}\frac{1}{d}\le x\sum_{d\le x}\frac{1}{d^2},$$
to see that
$$
\sum_{\substack{3\le h\le t\\ h~{\text{\rm odd}}}} \frac{\sigma(h-1)}{h-1}=O(t).
$$
We use this estimate and 
the Abel summation formula with the sequence $a_h=\sigma(h-1)/(h-1)$ for $h\ge 3$ and odd (and $a_h=0$ otherwise) and the function $f(t)=t^{-1/4}$, to get that
\begin{equation}
\label{FLeq:6}
\sum_{\substack{3\le h\le x^{1/3}\\ h~{\text{\rm odd}}}} \(\frac{\sigma(h-1)}{h-1}\)\frac{1}{h^{1/4}} \ll \int_{1}^{x^{1/3}} \frac{dt}{t^{1/4}}\ll t^{3/4}\Big|_{t=1}^{t=x^{1/3}} \ll x^{1/4},
\end{equation}
which, together with~\eqref{FLeq:4}, gives a bound of 
$O(x/(\log x)^{3})$ for the number of $n\in \cE_4(x)$ in Case 1.

\subsubsection*{Case 2}  Assume that 
$$
\frac{x}{p(p+h-1)}\ge \frac{(p+h-1)(p-1)}{h}.
$$
Then
$$
xh>p(p-1)q^2\ge q^4/4,
$$
giving $q<2^{1/2} (xh)^{1/4}$. In this case, 
$$
r\le \frac{q(p-1)}{h}<\frac{4p^2}{h},
$$
and the number of such primes is 
$$
\pi\(\frac{4p^2}{h}\) = O\(\frac{p^2}{h(\log x)}\),
$$
where we used the observation that $p^2/h>p>x^{1/5}$.
Again, fixing $h$ and summing up over $p,$ we get a bound of
\begin{equation}
\label{FLeq:5}
O\(\frac{1}{h(\log x)} \sum_{\substack{x^{1/5}<p<2^{1/2} (xh)^{1/4}\\ p\in \cP_h}} p^2\).
\end{equation}
Since $p\in \cP_h$, it follows, by the Abel summation formula, that
\begin{eqnarray*}
\sum_{\substack{x^{1/5}\le p<2^{1/2} (xh)^{1/4}\\ p\in \cP_h}} p^2 & \ll & \frac{1}{(\log x)^2} \(\frac{\sigma(h-1)}{h-1}\)\int_{x^{1/5}}^{2^{1/2}(xh)^{1/4}} t^2 dt\\
& \ll & 
\frac{(xh)^{3/4}}{(\log x)^2} \(\frac{\sigma(h-1)}{h-1}\).
\end{eqnarray*}
Inserting this into~\eqref{eq:5}, we conclude that, for fixed $h,$ the number of $n\in \cE_4(x)$ in
Case~2 is 
$$
O\(\frac{x^{3/4}}{(\log x)^3} \(\frac{\sigma(h-1)}{h-1}\) \frac{1}{h^{1/4}}\).
$$
We now sum again over $h< x^{1/3}$ and use~\eqref{FLeq:6} to get that 
the number of $n\in \cE_4(x)$ in Case 2 is $O(x/(\log x)^{3})$.

\subsection*{Lower bound} Let $\varepsilon>0$ to be chosen later. Now suppose
that there are primes $p,q,r$ such that 
\begin{equation}\label{pepsi}
q\in (p,p(1+\varepsilon)),~~~r\in (q,q(1+\varepsilon)).
\end{equation}
Since
$$qr<pq(1+\varepsilon)^2\qquad {\text{\rm and}}\qquad (p+q)(p-1)<(q+r)(p-1),$$ the 
inequality~\eqref{primefun}  holds if 
\begin{equation}\label{ezpq}
1+\frac{p}{q}-\frac{1}{p}-\frac 1q>1 + \frac{1}{1+\epsilon}-\frac{1}{p}-
\frac{1}{q}>(1+\varepsilon)^2.
\end{equation}
Now we choose any $\varepsilon>0$ satisfying $1+1/(1+\varepsilon)>(1+\varepsilon)^3$ and observe
that there exists $p_0(\varepsilon)$ such that if $p\ge p_0(\varepsilon)$, then 
inequality~\eqref{ezpq} is satisfied.

Note that if $p^3(1+\varepsilon)^2\le x$, then for any choice of
$p\ge p_0(\varepsilon)$ and $q,r$ satisfying~\eqref{pepsi}, the inequality~\eqref{primefun} 
is satisfied and $pqr\le x$. 
Putting $c(\varepsilon)=(1+\varepsilon)^{-2/3}$ and noting that by~\eqref{simplepi}, we have 
$$
\pi(x(1+\varepsilon))-\pi(x)= \frac{\(\varepsilon +o(1)\) x}{\log x},
$$ it follows that 
%the number 
%of $n \in \cQ_3(x)$ for which 
% $g(\Psi_n)\ne 2n/p-\deg \Psi_n$ is
%$$
%\cE_3(x) \ge \sum_{p_0(\varepsilon)<p\le c_1x^{1/3}}\sum_{p<q<p(1+\varepsilon)}\sum_{q<r<q(1+\varepsilon)}1
%\gg \sum_{p(\varepsilon)<p\le c_1x^{1/3}}\frac{p}{\log p}\frac{q}{\log q}.
%$$
%The latter sum is
%$$
%\gg  \sum_{p(\varepsilon)<p\le c_1x^{1/3}}\frac{p^2}{(\log p)^2}
%\gg \int_{p(\varepsilon)}^{c_1x^{1/3}}\frac{t^2dt}{(\log t)^3}\gg \frac{x}{(\log x)^3}.
%$$
%This completes the proof.
\begin{equation*}
\begin{split}
\# \cE_3(x) & \ge \sum_{p_0(\varepsilon)<p\le c(\varepsilon) x^{1/3}}\sum_{p<q<p(1+\varepsilon)}\sum_{q<r<q(1+\varepsilon)}1\\
& \gg_{\varepsilon}
 \sum_{p_0(\varepsilon)<p\le c(\varepsilon) x^{1/3}} \sum_{p<q<p(1+\varepsilon)} \frac{q}{\log q}\\
 &\gg_{\varepsilon}  \sum_{p_0(\varepsilon)<p\le c(\varepsilon) x^{1/3}}\frac{p^2}{(\log p)^2}
\gg_{\varepsilon} \int_{p_0(\varepsilon)}^{c(\varepsilon) x^{1/3}}\frac{t^2dt}{(\log t)^3}\gg_{\varepsilon} \frac{x}{(\log x)^3}.
 \end{split}
\end{equation*}
Taking, e.g., $\varepsilon=10^{-1}$ completes the proof.
\end{proof}

We have now the ingredients to estimate $\# \cE_3(x)$.

\begin{theorem}
\label{gnoteq}
For the set 
$$
\cE_3=\{n=pqr: 2<p<q<r~{\text{\rm and}}~ g(\Psi_n)\ne  2n/p-\deg \Psi_n. \}.
$$
we have
%\mbox{\phantom{Hi Pieter!}}
$$\frac{\# \cQ_3(x)}{(\log x)^2 (\log \log x)^2 }\ll 
\#\cE_3(x)  \ll \frac{\# \cQ_3(x)}{(\log x) (\log \log x)^2 }.
$$
%\item $g(\Psi_n)=2n/p-\deg \Psi_n$ for all but $o(\# \cQ_3(x))$ 
%integers $n \in \cQ_3(x)$.
%\item $g(\Psi_n)\ne 2n/p-\deg \Psi_n$
%for $\gg x/(\log x)^3$ integers $n \in \cQ_3(x)$.
%\end{itemize}
\end{theorem}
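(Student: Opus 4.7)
The plan is to derive both bounds immediately from results already established in the excerpt, so this proof is essentially an accounting exercise combining Lemma~\ref{acht}, Theorem~\ref{florian}, Theorem~\ref{E4}, and the asymptotic formula~\eqref{asgelijk}. No new estimates are required; the work has already been done.

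First, I would rewrite both targets in the absolute form $x/(\log x)^k$. By~\eqref{asgelijk},
$$
\frac{\#\cQ_3(x)}{(\log x)(\log\log x)^2} \asymp \frac{x}{(\log x)^2}
\qquad\text{and}\qquad
\frac{\#\cQ_3(x)}{(\log x)^2(\log\log x)^2} \asymp \frac{x}{(\log x)^3}.
$$
So the theorem reduces to the two-sided bound $x/(\log x)^3 \ll \#\cE_3(x) \ll x/(\log x)^2$.

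For the upper bound I would invoke Lemma~\ref{acht}, which gives $\cE_3 \subseteq \cR_3$, so $\#\cE_3(x) \le \#\cR_3(x)$; then Theorem~\ref{florian} (or even just the crude bound $\#\cR_3(x) \ll x/(\log x)^2$ visible in its main term) yields $\#\cE_3(x) \ll x/(\log x)^2$. For the lower bound I would use the other inclusion in Lemma~\ref{acht}, namely $\cE_4 \subseteq \cE_3$, so $\#\cE_3(x) \ge \#\cE_4(x)$; then the lower bound half of Theorem~\ref{E4} gives $\#\cE_3(x) \gg x/(\log x)^3$. Converting back to the $\cQ_3(x)$ form via~\eqref{asgelijk} produces exactly the claimed inequalities.

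There is no real obstacle here, since every ingredient has already been proved. The only thing worth flagging is that the upper and lower bounds come from different sources (one from the ambient set $\cR_3$, the other from the subset $\cE_4$), and the ratio between them is a factor of $\log x$; closing this gap would require a more refined analysis of which elements of $\cR_3$ actually lie in $\cE_3 \setminus \cE_4$, but this is beyond what the statement demands.
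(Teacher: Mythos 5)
Your proposal is correct and follows essentially the same route as the paper: the upper bound via $\cE_3\subseteq\cR_3$ (Lemma~\ref{acht}) combined with the bound on $\#\cR_3(x)$, and the lower bound via $\cE_4\subseteq\cE_3$ together with Theorem~\ref{E4} and the conversion through~\eqref{asgelijk}. The only cosmetic difference is that the paper cites the pre-converted estimate~\eqref{easier} for the upper bound rather than Theorem~\ref{florian} plus~\eqref{asgelijk}, which is the same content.
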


\begin{proof} 
The upper bound follows from Lemma~\ref{acht} and~\eqref{easier}. 
The lower bound follows from Lemma~\ref{acht}, Theorem~\ref{E4} and
the asymptotic formula~\eqref{asgelijk} for $\# \cQ_3(x)$.
\end{proof}

\section{Cyclotomic polynomials and numerical semigroups} 
\label{numericalsemi}

\subsection{Numerical semigroups} 

A {\it{numerical semigroup}} $S$ is a submonoid of 
$(\Z_{\ge0},+)$  with finite complement in $\Z_{\ge0}$. The nonnegative integers not in $S$ are called {\it{gaps}}, and the largest gap is the {\it{Frobenius number}}, denoted $ F(S)$. A numerical semigroup admits a unique and finite minimal system of generators; its cardinality is called {\it{embedding dimension}}, denoted $e(S)$, and its elements {\it minimal generators\/}. 
If $S=\langle a_1,\ldots,a_e \rangle$ is a minimally generated monoid (with $a_i$ positive integers), then $S$ is a numerical semigroup if and only if $\gcd(a_1,\ldots,a_e)=1$ (for a comprehensive introduction to numerical semigroups, 
see,~for example,~\cite{roos}). 

To a numerical semigroup $S$ we associate
$H_S(Z)=\sum_{s\in S}Z^s$, its {\it {Hilbert series}}, and $ P_S(Z)=(1-Z)H_S(Z)$, its {\it semigroup polynomial}. (Note that $P_S(Z)$ is indeed a polynomial, of degree $F(S)+1$, since all elements larger than $F(S)$ are in $S$; for the same reason, $H_S(Z)$ is not a polynomial.) 

Solely by the definition of $H_S$ and $P_S$, the
 following is 
easy to establish, see~\cite{Cio}:

\begin{lemma}
\label{coeff-PS}
Let $S$ be a numerical semigroup and assume that $ P_S(Z)=a_0+a_1Z+\cdots+ a_kZ^k$. Then for $s\in \{0,\ldots, k\}$
\[
a_s=\begin{cases}
1 & \hbox{if } s\in S\hbox{ and }s-1\not\in S,\\
-1 & \hbox{if } s\not\in S\hbox{ and }s-1\in S,\\
0 &\hbox{otherwise}.
\end{cases}
\]
\end{lemma}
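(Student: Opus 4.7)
The plan is to compute the coefficients of $P_S(Z)=(1-Z)H_S(Z)$ directly by expanding and comparing coefficients of $Z^s$. Let $\chi_S:\Z_{\ge 0}\to\{0,1\}$ denote the indicator function of $S$, with the convention $\chi_S(-1)=0$. Then by the definition of the Hilbert series,
\[
H_S(Z)=\sum_{s\ge 0}\chi_S(s)Z^s.
\]

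Next I would multiply by $(1-Z)$. Writing
\[
(1-Z)H_S(Z)=\sum_{s\ge 0}\chi_S(s)Z^s-\sum_{s\ge 0}\chi_S(s)Z^{s+1},
\]
and re-indexing the second sum by $s\mapsto s-1$, one reads off the coefficient of $Z^s$ as
\[
a_s=\chi_S(s)-\chi_S(s-1)
\]
for every $s\ge 0$ (where for $s=0$ the second term vanishes by our convention). This is really the only computational step; the rest is bookkeeping.

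Finally, since $\chi_S(s)$ and $\chi_S(s-1)$ each take values in $\{0,1\}$, the value $a_s=\chi_S(s)-\chi_S(s-1)$ lies in $\{-1,0,1\}$, and a case-by-case inspection of the four possibilities for the pair $(\chi_S(s),\chi_S(s-1))$ matches exactly the three cases in the statement: $a_s=1$ iff $s\in S$ and $s-1\notin S$; $a_s=-1$ iff $s\notin S$ and $s-1\in S$; and $a_s=0$ in the remaining two cases (both in $S$ or both not in $S$). Since $F(S)$ is finite, $\chi_S(s)=1$ for all $s>F(S)$, so $a_s=0$ for $s>F(S)+1$, which confirms that the degree of $P_S$ is at most $F(S)+1$ as already noted in the paper.

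There is essentially no obstacle here beyond taking care of the boundary case $s=0$ (handled by setting $\chi_S(-1)=0$, which is consistent since $0\in S$ forces $a_0=1$, as one checks directly from $P_S(0)=H_S(0)=1$). The argument is a one-line manipulation followed by a truth-table check.
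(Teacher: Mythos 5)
Your proof is correct and is exactly the argument the paper has in mind (the paper itself omits the proof, noting only that the lemma follows ``solely by the definition of $H_S$ and $P_S$'' and citing~\cite{Cio}): expanding $(1-Z)H_S(Z)$ gives $a_s=\chi_S(s)-\chi_S(s-1)$, and the truth-table check yields the three cases. The boundary case $s=0$ is handled cleanly by your convention $\chi_S(-1)=0$.
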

It is a folklore result (see, for instance~\cite{Mor2}) that, if $S=\langle a,b \rangle$ 
is a numerical semigroup (that is, $\gcd(a,b)=1$), then 
\begin{equation}
\label{pq}
P_S(Z)=\frac{(1-Z)(1-Z^{ab})}{(1-Z^a)(1-Z^b)}.
\end{equation}
Suppose that $S=\langle p,q\rangle$ with $p,q$ distinct primes. Then by~\eqref{prod1}
\begin{equation}
\label{connection}
P_S(Z)=\Phi_{pq}(Z).
\end{equation} 

\subsection{Applications to $\Phi_{pq}$}
 
The identity~\eqref{connection} and Lemma~\ref{coeff-PS} allow one to prove results concerning the coefficients of $\Phi_{pq}$ by studying certain properties of the numerical semigroup $\langle p,q\rangle$. 

Before going further, we give some definitions. 
Let $S$ be a numerical semigroup. 

We say that $D=\{n+1,\ldots, n+k\}\subset \Z_{\ge0}$ is a $k$-{\it gapblock} (or a \textit{gapblock} of size $k$) if $S\cap (D\cup \{n,n+k+1\})=\{n,n+k+1\}$. (In the literature this is also named a $k$-\textit{desert}). 

Similarly, $E=\{n+1,\ldots, n+k\}\subset \Z_{\ge0}$ is a $k$-{\it elementblock} 
(or an \textit{elementblock} of size $k$) if $S\cap (E\cup\{n,n+k+1\})=E$.    

A key fact is that a numerical semigroup of embedding dimension $2$, hence $S=\langle p,q \rangle$ in particular, 
is \textit{symmetric}, that is, $S\cup(F(S)-S)=\Z$, where $F(S)-S=\left\lbrace F(S)-s~:~s\in S\right\rbrace $ (see, for example,~\cite{Mor2} 
and~\cite{roos}). 
This implies that there is a one to one correspondence between
$k$-gapblock and $k$-elementblocks in $S$, which is relevant for our later purposes.

In what follows, the notation $\{0\}_m$ is used to indicate a string
$\underbrace{0,\ldots,0}_{m}$ of $m$ consecutive zeros.

\begin{theorem} 
\label{oana} 
Let $p<q$ be primes. Then
\begin{itemize}
\item[(i)] $g\(\Phi_{pq}\)=p-1$ and the number of maximum gaps equals $2\left\lfloor q/p \right\rfloor$;

\item[(ii)] $\Phi_{pq}(Z)$ contains the sequence of consecutive coefficients of the form $\pm 1,
\{0\}_{m},\mp 1$ for all $m=0,1,\ldots,p-2$ if and only if 
$q\equiv \pm 1 \pmod p$.
\end{itemize}
\end{theorem}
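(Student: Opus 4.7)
The plan is to work entirely on the semigroup side via the identification $\Phi_{pq}=P_S$ of~\eqref{connection} together with Lemma~\ref{coeff-PS}. Under that dictionary, a maximal run of $k$ consecutive integers all in (resp.\ all outside) $S=\langle p,q\rangle$, bordered on each side by a non-element (resp.\ an element), produces in the coefficient sequence a pattern $\pm 1,\{0\}_{k-1},\mp 1$, that is, a gap of length $k$ between two consecutive nonzero coefficients. Because $\langle p,q\rangle$ is symmetric, $k$-elementblocks and $k$-gapblocks are in bijection. The whole analysis is driven by the Apéry set: for each residue $r\in\{0,\ldots,p-1\}$ the smallest element of $S$ in that class is $\alpha_r=\beta_r q$, where $\beta$ is the permutation of $\{0,\ldots,p-1\}$ defined by $\beta_r\equiv rq^{-1}\pmod p$, and $n\in S$ iff $n\ge\alpha_{n\bmod p}$.

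For (i), the upper bound $g(\Phi_{pq})\le p-1$ is immediate: any $p$ consecutive integers hit every residue class mod $p$, so an elementblock of size $p$ would contain the integer in the class $-q\bmod p$, whose smallest representative in $S$ is $\alpha_{-q\bmod p}=(p-1)q$; since $(p-1)q>F(S)$, such a block cannot fit inside the support of $\Phi_{pq}$. Counting elementblocks of size $p-1$ reduces to the following. For a block $\{n+1,\ldots,n+p-1\}\subset S$ with $n,n+p\notin S$ one has $n\equiv n+p\pmod p$, and the condition $n+p<\alpha_{n\bmod p}$ forces $\beta_{n\bmod p}=p-1$, that is $r^*:=n\bmod p=-q\bmod p$. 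Writing $n=r^*+kp$, the conditions $n+j\ge\alpha_{r^*+j}$ for $j=1,\ldots,p-1$ collapse to a single binding one coming from the index $j$ with $\beta_{r^*+j}=p-2$ (which works out to $j=r^*$). A short computation then shows that the resulting interval of admissible $k$ contains exactly $\lfloor q/p\rfloor$ integers, and doubling (to include the gapblocks) yields the stated $2\lfloor q/p\rfloor$.

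For (ii), direction ($\Leftarrow$) is constructive. When $q=\ell p+1$ one has $\beta_r=r$, so $n=cp+d$ with $0\le d\le p-1$ lies in $S$ iff $c\ge d\ell$. A row by row scan then produces, for each $j\in\{0,\ldots,p-2\}$ and each $c\in[j\ell,(j+1)\ell-1]$, an elementblock $\{cp,\ldots,cp+j\}$ of size $j+1$; so every size from $1$ to $p-1$ is realised. The case $q=\ell p-1$ is symmetric ($\beta$ is the reverse permutation). Consequently every pattern $\pm 1,\{0\}_m,\mp 1$ with $m\in\{0,\ldots,p-2\}$ is present.

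For the reverse direction it suffices to use only the pattern with $m=p-3$, that is, to exhibit a block of size $p-2$. Such a block corresponds to a cyclic window of $p$ consecutive positions in $\beta$ whose two endpoints --- which are cyclically adjacent --- must carry the two largest values $p-1$ and $p-2$ of $\beta$. These sit at positions $-q\bmod p$ and $-2q\bmod p$, whose cyclic distance equals $\min(q\bmod p,\,p-q\bmod p)$, and this equals $1$ precisely when $q\equiv\pm 1\pmod p$. The main obstacles I expect are the arithmetic book-keeping in (i) that pins down the length $\lfloor q/p\rfloor$ of the admissible range for $k$, and in (ii) the structural recognition that size $p-2$ is the critical one, uniquely singling out the residues $\pm 1\pmod p$.
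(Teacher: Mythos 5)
Your proposal is correct and follows essentially the same route as the paper: translate everything into blocks of the numerical semigroup $\langle p,q\rangle$ via Lemma~\ref{coeff-PS}, pass between gapblocks and elementblocks by symmetry, and for the converse in (ii) isolate the existence of a block of size $p-2$ as the condition that forces $q\equiv\pm1\pmod p$. The only real difference is one of bookkeeping: you run the membership test through the Ap\'ery set ($n\in S$ iff $n\ge\alpha_{n\bmod p}$) and count the $(p-1)$-elementblocks by locating the residue with $\beta=p-1$, whereas the paper enumerates the $(p-1)$-gapblocks $\{jp+1,\ldots,jp+p-1\}$, $0\le j<\left\lfloor q/p\right\rfloor$, and analyzes the intervals $[pk,p(k+1))$ by hand; your computations check out (in particular the admissible range for $k$ is a half-open interval of integer length exactly $\left\lfloor q/p\right\rfloor$, hence contains that many integers).
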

\begin{proof} We prove parts (i) and~(ii) separately.

\subsubsection*{\normalfont(i)} The fact that $g\(\Phi_{pq}\)=p-1$  was already proven in~\cite{Mor2}, so let us only deal with the second claim. Set $q=pk+h$, with $k=\left\lfloor q/p \right\rfloor$ and $1\le h\le p-1$. Consider $S=\langle p,q \rangle$ and note that, by Lemma~\ref{coeff-PS}, a maximum gap in $\Phi_{pq}$ corresponds to both a $(p-1)$-gapblock and a $(p-1)$-elementblock in $S$. Let us see what the elements of $S$ are. First, we have $kp\in S$, for all $k\in\Z_{\ge0}$. Next, the smallest element in $S$ is $q=pk+h$, which lies in the interval $(pk,p(k+1))$. It is clear now that we have precisely $\left\lfloor q/p \right\rfloor$ gapblocks of size $(p-1)$ in $S$, namely $\left\lbrace jp+1,\ldots,jp+p-1\right\rbrace $, for $j=0,1,\ldots, k-1$. Since $S$ is symmetric, to any $k$-gapblock in there corresponds a $k$-elementblock and conversely. Thus, there are $\left\lfloor q/p \right\rfloor$ gapblocks and $\left\lfloor q/p \right\rfloor$ elementblocks of size $p-1$ in $S$, leading to $2\fl{q/p}$ maximum gaps 
in $\Phi_{pq}$.   

\subsubsection*{\normalfont(ii)} Let $S=\langle p,q\rangle$. 
By~\eqref{connection}, we have $P_S=\Phi_{pq}$. Note that, in light of Lemma~\ref{coeff-PS}, the sequence $1,\{0\}_{m},-1$ of coefficients of $\Phi_{pq}$ corresponds to an $(m+1)$-gapblock in $S$ (similarly, the sequence $-1,\{0\}_{m},1$ corresponds to an $(m+1)$-elementblock in $S $) and conversely. 
Since $\Phi_{pq}$ is self-reciprocal or equivalently, $S=\langle p,q \rangle$ is symmetric, 
it is enough to consider only the sequences $1,\{0\}_{m},-1$.
Therefore, proving part~(ii)
%Theorem~\ref{oana}
 is equivalent to showing the following:
 \begin{center}\it
$S=\langle p,q \rangle$ has gapblocks of sizes $1,2,\ldots,p-1$ 
if and only if $q\equiv \pm 1\pmod p$.
\end{center}
To prove this claim, we consider both implications separately.
First, assume $q\equiv \pm 1\pmod p$. We only treat the case $q\equiv 1\pmod p$, as the other can be dealt with in a similar manner.
%\commI{What about $(p,q) = (2,3)$. Then $k=1$?}
Let $q=pk+1$ for some $k\in\Z_{>0}$. Then, for $1\leq m\leq p-1$, the intervals $\cI_m=[mpk,\ldots,mpk+p)$ are pairwise disjoint. Next, note that if $a,b\in\Z_{\ge0}$ are so that $mpk\leq a p+b q<mpk+p$, then $b\leq m$. Conversely, for any such $b$ there is a unique $a\in\Z_{\ge0}$ such that $mpk\leq a p+b q<mpk+p$, since for a fixed $0\leq b\leq m$, exactly one number from $\left \{ a p+b q~:~a\in\Z_{\ge0}\right\}$  lands in $\cI_m$. 
We can write any number 
$$
mpk +h = (m-h)kp + hq, \quad\text{for~} h = 0,1, \ldots, m,
$$
in the form $a p+b q$, with $0\leq b\leq m$
%\commI{What are $a$ and $b$ here? Do we have $0 \in \N$?
%Otherwise $a,b\in\N$  forbids $b = 0$ in $0\leq b\leq m$}
and, by the above, no other element of $\cI_m$ can be written in this way. Therefore  $$
\cI_m\cap S=[mpk,mpk+1,\ldots,mpk+m]
$$ and $\left\lbrace mpk+m+1,\ldots,mpk+p-1\right\rbrace $ is a $(p-m)$-gapblock of $S$, for $m=1,2,\ldots,p-1$. 

By considering the intervals $\cI_m=(mpk-p,\ldots,mpk]$ we can give a similar argument in case $q\equiv-1\pmod p$, on taking $k=(q+1)/p$.

Conversely, since the intervals $\cJ_k=[pk,\ldots,p(k+1))$ where $k\in\Z_{\ge0}$, form a partition of $\Z_{\ge0}$, $q$ must lie in some $\cJ_k,$ $k\ge 1$ (as $p<q$) and $q\neq pk$. We claim that  
\begin{equation} \label{gaps}q=pk+1\quad \text{or}\quad q=p(k+1)-1.\end{equation} 
Assume otherwise. Note that $\cJ_k\cap S=\left\{pk,q \right\}$, because if $s=a p+b q\in \cJ_k\cap S$ and $s\neq pk$, then $b\geq 1$. If $b\geq 2$, then $s>p+q>p(k+1)$, and $s\notin \cJ_k$. Hence, $b=1$. Next, $s\neq q$ implies $a\geq 1$ and then again $s\geq p+q>p(k+1)$. Therefore we obtain two gapblocks $\left\lbrace pk+1,\ldots,q-1\right\rbrace $ and $\left\lbrace q+1,\ldots,p(k+1)-1\right\rbrace $, of size at most $p-3$. But now, as $\cJ_h\cap S=\left\lbrace ph,p(h+1)-1\right\rbrace$, for all $0\leq h\leq k-1$, we have only $(p-1)$-gapblocks before $\cJ_k$. Also, $q+lp\in \cJ_{k+l}\cap S$ is different from $p(k+l)+1$ and $p(k+l+1)-1$, hence there can be no $(p-2)$-gapblock in $\cJ_{k+l}$, for $l\geq 1$. But then $S$ has no $(p-2)$-gapblock, contradiction. Hence,~\eqref{gaps} is true and yields $q \equiv\pm 1\pmod p$. 
\end{proof}

Let $N_2(x)$ be the number of cyclotomic polynomials $\Phi_{n}$ with $n = pq\le x$ and 
$q\equiv \pm 1\pmod p$. By the Dirichlet Theorem on primes in arithmetic progressions, we see that 
$N_2(x)\rightarrow \infty$ as $x\rightarrow \infty$.
We can make this more precise.

\begin{cor}\label{coroana}
We have
$$
N_2(x)= C \frac{x}{\log x}+O\(\frac{x}{(\log x)^2}\),
$$
where
$$
C = \frac{1}{2} + \sum_{p \ge  3}  \frac{2}{p(p-1)}=-\frac{1}{2}+2\sum_{k=2}^{\infty}\log \zeta(k)\frac{(\varphi(k)-\mu(k))}{k}.$$
Numerically
$$C=1.0463133380995902557287349197118847 \ldots.$$
\end{cor}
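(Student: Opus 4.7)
The plan is to fix the smaller prime $p$ in the factorization $n=pq$ and count primes $q\in(p,x/p]$ with $q\equiv\pm 1\pmod p$. For $p=2$ the congruence is automatic for odd primes, so the contribution equals $\pi(x/2)-1=x/(2\log x)+O(x/(\log x)^2)$ directly from~\eqref{simplepi}. For $p\ge 3$ the residues $\pm 1$ are distinct and coprime to $p$, so the count equals $\pi(x/p;p,1)+\pi(x/p;p,-1)+O(1)$, where $\pi(y;q,a)$ counts primes $\le y$ congruent to $a$ modulo $q$ and the $O(1)$ absorbs the at most one prime $q\le p$ lying in these residue classes.

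I would then split the $p$-sum at $P=(\log x)^A$ for a fixed large $A\ge 2$. For $3\le p\le P$, the Siegel--Walfisz theorem gives
$$
\pi(y;p,\pm 1)=\frac{\operatorname{Li}(y)}{p-1}+O\!\left(y\exp(-c\sqrt{\log y})\right)
$$
uniformly, whose total contribution after summing $1/p$ over $p\le P$ is $O(x\exp(-c'\sqrt{\log x}))=o(x/(\log x)^2)$. For $P<p<\sqrt{x}$ I would use the elementary observation that the number of integers $q\in(p,x/p]$ with $q\equiv\pm 1\pmod p$ is $\ll x/p^2+1$; combining $\sum_{p>P}1/p^2\ll 1/(P\log P)$ with the trivial bound $\pi(\sqrt{x})\ll\sqrt{x}/\log x$, the large-$p$ contribution is $\ll x/((\log x)^A\log\log x)+\sqrt{x}/\log x\ll x/(\log x)^2$. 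For the main term I would expand $\operatorname{Li}(x/p)=(x/p)/\log(x/p)+O((x/p)/(\log x)^2)$ and $1/\log(x/p)=1/\log x+O(\log p/(\log x)^2)$; both corrections sum to $O(x/(\log x)^2)$ because $\sum_p\log p/(p(p-1))$ converges absolutely. Completing the sum $\sum_{p\ge 3}2/(p(p-1))$ beyond $P$ costs only $O(x/(\log x)^{A+1})$ in the main coefficient, yielding the first expression for $C$.

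For the second closed form I would expand $1/(p(p-1))=\sum_{k\ge 2}p^{-k}$ and interchange sums to obtain $\sum_{p\ge 2}1/(p(p-1))=\sum_{k\ge 2}P(k)$, where $P(s)=\sum_p p^{-s}$ is the prime zeta function. Invoking the classical identity $P(s)=\sum_{n\ge 1}(\mu(n)/n)\log\zeta(ns)$ and setting $m=nk$, the double sum rearranges into $\sum_{m\ge 2}\log\zeta(m)\sum_{d\mid m,\,d<m}\mu(d)/d$, and the inner sum equals $(\varphi(m)-\mu(m))/m$ by standard M\"obius convolution (since $\sum_{d\mid m}\mu(d)/d=\varphi(m)/m$). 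Combining this with the isolated $p=2$ term $1/2$ reproduces the alternative formula. The main obstacle is not any individual step but the careful accounting of error terms to secure the sharp rate $O(x/(\log x)^2)$: the Siegel--Walfisz savings must be invoked only for small $p$, the elementary integer bound must handle the range $p>(\log x)^A$ uniformly (Brun--Titchmarsh losing its grip near $p=\sqrt{x}$), and the Taylor truncation of $1/\log(x/p)$ must itself fall into the error, which forces $A$ to be chosen appropriately large.
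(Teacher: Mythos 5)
Your proposal is correct and follows the same basic strategy as the paper: fix the smaller prime $p$, isolate the $p=2$ contribution $\pi(x/2)$, write the rest as $\sum_{p\ge 3}\bigl(\pi(x/p;p,1)+\pi(x/p;p,-1)\bigr)$, use Siegel--Walfisz for small $p$, expand $1/\log(x/p)=1/\log x+O(\log p/(\log x)^2)$ to extract the main term, and derive the zeta-value expression for $C$ via $1/(p(p-1))=\sum_{k\ge2}p^{-k}$, the identity $\sum_p p^{-s}=\sum_n(\mu(n)/n)\log\zeta(ns)$, and $\sum_{d\mid m}\mu(d)/d=\varphi(m)/m$. The one genuine difference is your treatment of the tail: the paper splits the range of $p$ into three pieces, applying Siegel--Walfisz for $p<\log x$, Brun--Titchmarsh for $\log x\le p\le x^{1/3}$, and the trivial bound $\pi(z,k,a)\ll z/k$ for $p>x^{1/3}$; you instead push the Siegel--Walfisz range out to $p\le(\log x)^A$ with $A\ge 2$ (legitimate, since Siegel--Walfisz is uniform for moduli up to any fixed power of $\log$) and then cover all of $p>(\log x)^A$ by the purely elementary count of integers in the two residue classes, giving $\sum_{p>(\log x)^A}(x/p^2+1)\ll x/(\log x)^2$. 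This removes the need for Brun--Titchmarsh altogether, at no cost in the error term; the paper's choice keeps the Siegel--Walfisz range minimal at the price of one extra sieve input. Both routes land on the same main term and the same $O(x/(\log x)^2)$ error, and your bookkeeping of the secondary terms (convergence of $\sum_p\log p/(p(p-1))$, the $O(1)$ per prime from the constraint $q>p$, completing the sum over $p$) is sound.
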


\begin{proof} For a real $z \ge 1$ and integers $k > a\ge 1$, let, as usual, $\pi(z, k, a)$ 
denote the number of primes $q \le z$ in the arithmetic
progression $q \equiv a \pmod k$.
We have
\begin{equation}
\label{eq:N2}
N_2(x) = \pi\( \frac{x}{2}\)  + \sum_{3 \le p \le x} \( \pi\( \frac{x}{p},p,1\ \)  + 
\pi\( \frac{x}{p},p,-1\)  \) .
\end{equation}
We now choose 
$$
y= \log x \mand Y = x^{1/3}
$$ 
and for $p< y$, we use the  Siegel-Walfisz theorem
(see~\cite[Corollary~5.29]{IwKow})  in the form
$$
\pi(z, k, a) = \frac{\pi(z)}{\varphi(k)} + O\( \frac{z}{\log z)^{3}}\).
$$
For primes $p \in [y,Y]$, we use the Brun-Titchmarsh  theorem 
(see~\cite[Theorem~6.6]{IwKow}) in the form 
$$
\pi(z, k, a) \ll \frac{z}{\varphi(k) \log(2z/k)}, 
$$
which holds for any positive integer $k < z$. 
Finally, for $p > Y$, we use the trivial bound 
$$
\pi(z, k, a) \ll \frac{z}{k} .
$$
So, collecting the above estimate and bounds we obtain
\begin{equation*}
\begin{split}
N_2(x)   &=    \pi(x/2) + 2\sum_{3 \le p < y}   \( \frac{\pi(x/p)}{p-1} + 
O\(\frac{x}{p(\log x)^{3}}\)\)\\
%\qquad \qquad \qquad \qquad \qquad \quad  
& \phantom{{}=\pi(x/2)}+  \sum_{y \le p \le Y} \frac{x}{p^2 \log x}  +  
\sum_{Y < p \le x} \frac{x}{p^2}, 
\end{split}
\end{equation*}
which yields
\begin{equation}
\label{eq:N2 expand}
N_2(x) =    \pi(x/2) +2 \sum_{3 \le p < y}  \frac{\pi(x/p)}{p-1} + O\(\frac{x}{(\log x)^{2}}\).
\end{equation}
%\begin{align}\label{eq:N2 expand}
%N_2(x)   &=    \pi(x/2) \begin{aligned}[t]&+ \sum_{3 \le p < y}    \frac{\pi(x/p)}{p-1} +O\(\frac{x}{p(\log x)^{3}}\)\\
%& +  \sum_{y \le p \le Y} \frac{x}{p^2 \log x}  +  \sum_{Y < p \le x} \frac{x}{p^2}\end{aligned}\\
%& =    \pi(x/2) +2 \sum_{3 \le p < y}  \frac{\pi(x/p)}{p-1} + O\(\frac{x}{(\log x)^{2}}\).\nonumber
%\end{align}
Now, using~\eqref{simplepi}, we derive
\begin{align*}
%\begin{split}
\pi(x/p) & =  \frac{x}{p(\log x - \log p)} + O\(\frac{x}{p(\log x)^{2}}\) \\
& =  \frac{x}{p \log x} + O\(\frac{x\log p}{p(\log x)^{2}}\). 
%  \end{split}
\end{align*}
Hence, 
\begin{align*}
 \sum_{3 \le p < y}  \frac{\pi(x/p)}{p-1}  
& =  \frac{x}{\log x}  \sum_{3 \le p < y}  \( \frac{1}{p(p-1)} + O\(\frac{\log p}{p^2 \log x}\)\) \\
& =  \frac{x}{\log x}  \sum_{p\ge 3}  \frac{1}{p(p-1)} + O\(\frac{x}{(\log x)^2}\(\sum_{p\ge 3} \frac{\log p}{p^2}\)\) \\
& =  \frac{x}{\log x}  \sum_{p \ge  3}  \frac{1}{p(p-1)} +   O\(\frac{x}{(\log x)^{2}}\).
\end{align*}
Substituting this bound into~\eqref{eq:N2 expand}, we obtain
the desired result with $C$ equal to the prime sum given. This sum is not suitable
for obtaining $C$ with high numerical precision. In order to 
achieve that, 
we express $C$ in terms of values of the zeta-values at integer arguments (which
can be approximated with enormous numerical precision very fast).
Using the well-known formulas
$$\sum_{p\ge 2} \frac{1}{p^s}=\sum_{n=1}^{\infty}\frac{\mu(n)}{n}\log \zeta(ns){\rm ~and~}
\frac{\varphi(n)}{n}=\sum_{d|n}\frac{\mu(d)}{d},$$ 
we obtain
\begin{equation*}
\begin{split}
\sum_{p\ge 2} \frac{1}{p(p-1)} & =\sum_{m=2}^{\infty}\sum_{p\ge 2}\frac{1}{p^k}=
\sum_{m=2}^{\infty}\sum_{n=1}^{\infty}\frac{\mu(n)}{n}\log \zeta(mn)\\
& = \sum_{k=2}^{\infty}\log \zeta(k) \sum_{n\cdot m=k,~m\ge 2}
\frac{\mu(n)}{n}\\
& =\sum_{k=2}^{\infty}\log \zeta(k)\frac{(\varphi(k)-\mu(k))}{k}.
\end{split}
\end{equation*}
On noting that the prime sum above equals $C/2+1/4$ the proof of the second equality for
$C$ is completed. This identity allows one to evaluate $C$ with hundreds of decimals of
precision.
\end{proof}
\begin{rem}
If we are interested in the number of binary integers
$n = pq\le x$ with
$q\equiv \pm 1\pmod p$, then the above estimate holds with
$C$ replaced by $C-1/2$. 
\end{rem}

\begin{rem} 
For more details on expressing prime products and 
prime sums (such as $C$) in terms of
zeta-values, see, for example,~\cite[pp.~208-210]{Coh} and~\cite{MazPet,Mor0}.
\end{rem}

\subsection{A small generalization} 
\label{smallgen}
The attentive reader might have noticed that in our proofs of 
Theorem~\ref{oana} we did not
use that $p$ and $q$ are prime, but only that they are coprime. We   
explore now what happens if we consider  $S=\langle a,b \rangle$ with
$2\le a<b$ and $a$ and $b$ coprime. The coprime condition ensures that $S$ is a numerical
semigroup. Our earlier proofs then apply to
$$
P_S(Z)=\frac{(1-Z)(1-Z^{ab})}{(1-Z^a)(1-Z^b)}=\prod_{d|ab,d\nmid a, d\nmid b}\Phi_d(Z),
$$
where we used~\eqref{pq} and~\eqref{allebegin...}.
The polynomial $P_{\langle a,b\rangle}(Z)$ is a {\it binary inclusion-exclusion 
polynomial}, see~\cite{bach}.
\begin{theorem}  
\label{maxgenbin}
Let $2\le a<b$ be coprime positive integers. Then 
\begin{itemize}
\item[(i)] the maximum gap
in $\prod_{d|ab,~d\nmid a,d\nmid b}\Phi_d(Z)$ equals $a-1$ and it occurs 
precisely $2\fl{b/a}$ times;

\item[(ii)] the polynomial in {\normalfont(i)} contains the sequence of consecutive coefficients $\pm 1,\{0\}_{m},\mp 1$ for all $m=0,1,\ldots,a-2$ if and only if $b\equiv \pm 1 \pmod a$.
\end{itemize}
\end{theorem}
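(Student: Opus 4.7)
The plan is to follow essentially the same argument as in the proof of Theorem~\ref{oana}, and to verify that the hypothesis ``$p,q$ primes'' was never truly used there; only coprimality was needed. Since $\gcd(a,b)=1$, the monoid $S=\langle a,b\rangle$ is indeed a numerical semigroup, and by~\eqref{pq} together with~\eqref{allebegin...} we have
\[
P_S(Z)=\frac{(1-Z)(1-Z^{ab})}{(1-Z^a)(1-Z^b)}=\prod_{d\mid ab,\,d\nmid a,\,d\nmid b}\Phi_d(Z).
\]
Lemma~\ref{coeff-PS} therefore reduces both parts of the statement to questions about gap- and element-blocks of $S$, and the embedding dimension being $2$ still yields the symmetry $S\cup(F(S)-S)=\Z$ and hence the bijection between $k$-gapblocks and $k$-elementblocks.

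For part~(i), I will write $b=ak+h$ with $k=\lfloor b/a\rfloor$ and $1\le h\le a-1$ (the lower bound on $h$ uses $\gcd(a,b)=1$, which prevents $a\mid b$). The elements $0,a,2a,\ldots,ka$ all lie in $S$, and the smallest element of $S$ not of the form $ja$ is $b\in(ak,a(k+1))$. Exactly as in the proof of Theorem~\ref{oana}(i), the intervals $\{ja+1,\ldots,ja+a-1\}$ for $j=0,1,\ldots,k-1$ are $(a-1)$-gapblocks, giving $\lfloor b/a\rfloor$ such blocks; symmetry of $S$ produces the same number of $(a-1)$-elementblocks. By Lemma~\ref{coeff-PS}, each contributes a maximum gap of length $a-1$ in $P_S$, and summing gives the announced total $2\lfloor b/a\rfloor$. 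The fact that no gapblock of size $\ge a$ exists in $\langle a,b\rangle$ was also already noted in~\cite{Mor2} and requires only coprimality.

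For part~(ii), the same interval argument will be repeated with $p,q$ replaced by $a,b$. Suppose $b\equiv 1\pmod a$, say $b=ak+1$. For $1\le m\le a-1$, the intervals $\cI_m=[mak,mak+a)$ are pairwise disjoint, and any element of $S\cap\cI_m$ written as $\alpha a+\beta b$ must satisfy $\beta\le m$ (else $\alpha a+\beta b\ge (m+1)b>mak+a$); conversely, for each such $\beta$ there is a unique $\alpha\ge 0$ placing $\alpha a+\beta b$ into $\cI_m$, uniqueness coming from $\gcd(a,b)=1$. The identity $mak+h=(m-h)ka+hb$ realizes $\beta=h$ for $h=0,\dots,m$, showing $\cI_m\cap S=\{mak,\dots,mak+m\}$ and hence producing an $(a-m)$-gapblock; as $m$ ranges over $1,\dots,a-1$ this yields gapblocks of every size $1,\dots,a-1$. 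The case $b\equiv -1\pmod a$ is symmetric (take $k=(b+1)/a$ and $\cI_m=(mak-a,mak]$). Conversely, if $b\not\equiv\pm 1\pmod a$ write $b\in\cJ_k=[ak,a(k+1))$; then $\cJ_k\cap S=\{ak,b\}$ because any other representation $\alpha a+\beta b$ in $\cJ_k$ would force $\beta\ge 2$ and hence $\alpha a+\beta b\ge a+b>a(k+1)$. This gives two gapblocks of size at most $a-3$ in $\cJ_k$, while the intervals $\cJ_0,\ldots,\cJ_{k-1}$ each have $\cJ_h\cap S=\{ah\}$ (a full $(a-1)$-gapblock), and for $l\ge 1$ the element $b+la\in\cJ_{k+l}\cap S$ is distinct from both $a(k+l)$ and $a(k+l+1)-1$, so $\cJ_{k+l}$ also fails to contain an $(a-2)$-gapblock. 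Hence no $(a-2)$-gapblock exists, and then by Lemma~\ref{coeff-PS} the requested coefficient pattern for $m=a-3$ is missing from $P_S$, a contradiction. Therefore $b\equiv\pm 1\pmod a$.

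I do not expect a serious obstacle here: the entire difficulty already resolved in Theorem~\ref{oana} was structural and depended only on coprimality, so the main care needed is bookkeeping — in particular ensuring that the uniqueness of the representation $\alpha a+\beta b$ within a single period interval $\cI_m$ rests on $\gcd(a,b)=1$ rather than on primality, and that the reduction $h\in\{1,\ldots,a-1\}$ in part~(i) is justified by $a\nmid b$.
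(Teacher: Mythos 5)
Your proposal is correct and follows exactly the route the paper takes: the paper's entire ``proof'' of Theorem~\ref{maxgenbin} is the observation in Section~\ref{smallgen} that the arguments for Theorem~\ref{oana} use only $\gcd(p,q)=1$, and you have simply carried out that verification explicitly with $a,b$ in place of $p,q$. The only blemish (inherited from the paper's own wording) is the label ``$(a-m)$-gapblock'' for the set $\{mak+m+1,\ldots,mak+a-1\}$, which has $a-m-1$ elements; this off-by-one does not affect the conclusion, since together with the $(a-1)$-gapblocks from part~(i) one still obtains all sizes $1,\ldots,a-1$.
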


The result that the 
maximum gap in $\prod_{d|ab,~d\nmid a,d\nmid b}\Phi_d(Z)$ equals $a-1$ already
appears in Moree~\cite{Mor2}.

\section{Bound on jumps}

\subsection{Definitions and background}
\label{backg}
It has been shown by Gallot and  Moree \cite{GaMo1}
that consecutive coefficients of ternary cyclotomic polynomials differ
by at most one. Bzd{\c e}ga~\cite{Bzd-3} gave a very different reproof of
this result and initiated the study of 
the number of ``jumps''. More precisely, since the sequence of 
the  coefficients
$a_n(k)$ is symmetric~\eqref{eq:sym},
the number of jumps up with $a_n(k) = a_n(k-1) + 1$
is the same as the number of jumps down with $a_n(k) = a_n(k-1) - 1$.
We denote this common number by $J_n$. 

Bzd{\c e}ga~\cite{Bzd-3} has shown that 
$$J_n > n^{1/3}$$ 
for any ternary cyclotomic polynomial
$\Phi_n$ and  conditionally, on a certain variant of
the prime $3$-tuple conjecture, that
\begin{equation}
\label{jeejee}
J_n < 15 n^{1/3}
\end{equation} 
for infinitely many $n$. 
As a special case he showed that if $m, 6m-1$ and $12m-1$ are all prime for some $m\ge 7$, 
then~\eqref{jeejee} holds with $n=m(6m-1)(12m-1)$.
Here, we   prove an unconditional variant of the upper bound~\eqref{jeejee}. 
\begin{theorem} \label{thm: Jn} 
For infinitely many $n=pqr$ with 
pairwise distinct odd primes $p$, $q$ and $r$, we have
$$
J_n \ll n^{7/8+ o(1)}.
$$
\end{theorem}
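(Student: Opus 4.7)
The plan is to construct unconditionally an infinite family of triples of distinct odd primes $(p,q,r)$ that approximate Bzd\c{e}ga's conditional triples $(m,6m-1,12m-1)$, and then to control $J_n$ by perturbing his argument around this ideal case.

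First I would use an unconditional bound on prime gaps, for example the classical Ingham bound $p_{\nu+1}-p_{\nu}\ll p_\nu^{5/8+\varepsilon}$ or the sharper Baker--Harman--Pintz bound with exponent $0.525$, to show that for every sufficiently large prime $p$ there exist primes $q$ and $r$ with
$$|q-(6p-1)| \ll p^{\theta} \mand |r-(12p-1)| \ll p^{\theta}$$
for an explicit $\theta < 1$. This yields an infinite family of triples $(p,q,r)$ of distinct odd primes; setting $n=pqr\asymp 72\,p^3$, we have $p \asymp n^{1/3}$ and the deviations $s := q-(6p-1)$ and $t := r-(12p-1)$ satisfy $|s|,|t|\ll p^\theta$.

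Second, I would revisit Bzd\c{e}ga's proof of the conditional bound $J_n< 15\, n^{1/3}$ in the ideal case $s=t=0$, isolating the specific modular identities among $p,q,r$ that he exploits to limit the oscillation of the coefficient sequence of $\Phi_{pqr}$. Expressing $a_{pqr}(k)$ via the inclusion--exclusion identity~\eqref{prod1}, or through the numerical semigroup $\langle p,q,r\rangle$ in the spirit of Section~\ref{numericalsemi}, one obtains an oscillatory sum whose size in the perturbed regime encodes the extra contribution to $J_n$. The target is a bound of the form $J_n \ll p + p^{a}(|s|+|t|)$ for some manageable exponent $a$, derived by quantifying how violations of the modular identities, of magnitude $O(p^\theta)$, propagate to the coefficient sequence.

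Third, if a genuinely two-dimensional exponential sum arises from the two independent perturbations $s$ and $t$, it would be estimated by bounds on double Kloosterman sums (consistent with the paper's keywords); otherwise the Weil bound for individual Kloosterman sums should suffice. Optimising $\theta$ against the perturbation exponent $a$ then yields $J_n \ll n^{7/8+o(1)}$.

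The main obstacle is the perturbation analysis in the second step. Bzd\c{e}ga's argument in the ideal case is combinatorial and leverages exact arithmetic relations that force large blocks of coefficients to coincide; when $q$ and $r$ only approximately satisfy the relation $(6p-1,12p-1)$, those relations degrade, and a careful quantitative tracking is required to ensure that the number of extra jumps does not overwhelm the trivial $O(p)$ baseline beyond the quantity $p^{a}\cdot p^\theta$. Calibrating these exponents so as to recover precisely the exponent $7/8$ (rather than a weaker one) will be the crux of the proof.
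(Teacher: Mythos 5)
There is a genuine gap here, and it is fatal to the strategy rather than a detail to be filled in. The quantity that Bzd\c{e}ga's Theorem~4.1 actually feeds on is not the archimedean proximity of $(p,q,r)$ to the ideal triple $(m,6m-1,12m-1)$, but the smallness of the associated modular inverses: in the notation of the paper, the numbers $\alpha_p(q,r),\beta_p(q,r),\dots$, i.e.\ quantities like $\min\{q_p^*,\,p-q_p^*\}$. In the conditional example these are $O(1)$ because of exact congruences such as $6m\equiv 1\pmod q$ and $r=2q+1$. If you instead take $q=6p-1+s$ with $0<|s|\ll p^{\theta}$, then $q\equiv s-1\pmod p$ and $q_p^*$ is the inverse of $s-1$ modulo $p$, which for a generic admissible $s$ is an essentially arbitrary residue of size comparable to $p$; an archimedean perturbation of size $p^{\theta}$ does not propagate to a perturbation of size $p^{\theta}$ in the modular data --- it destroys that data completely. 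Consequently the hoped-for bound $J_n\ll p+p^{a}(|s|+|t|)$ has no basis: the coefficients of $\Phi_{pqr}$ depend on the residues and inverses of $p$, $q$, $r$ modulo one another, which vary discontinuously with $q$ and $r$, so no Lipschitz-type perturbation of the ideal case is available. The prime-gap input (Baker--Harman--Pintz) does produce the triples you describe, but they are of no use for this problem.

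The paper's proof goes a different way, and the contrast is instructive. It makes no attempt to approximate the special triple. Instead it applies Bzd\c{e}ga's Theorem~4.1 directly to general ternary $n=pqr$ and manufactures the required modular structure unconditionally: combining the Duke--Friedlander--Iwaniec bound for bilinear forms with Kloosterman fractions with the Erd\H{o}s--Tur\'an inequality, one shows that the fractions $\ell_p^*/p$ (over primes $\ell\sim L$ and $p\sim P$) are equidistributed well enough that some prime $p\sim P$ admits two primes $q,r\sim L$ with $q_p^*$ and $r_p^*$ both lying in a short initial interval $[1,hp/P]$; the companion inverses $p_q^*,p_r^*,q_r^*,r_q^*$ are then controlled as well. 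These small inverses are exactly the input Theorem~4.1 needs, and optimizing $L=P^{1/2}$ yields the exponent $7/8$. So the double Kloosterman sums you anticipated from the keywords are indeed the engine of the proof, but their role is to \emph{produce} the good triples, not to estimate an error term in a perturbation analysis.
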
 

\subsection{Preparations}

For two integers $k$ and $m$ with $\gcd(k,m)=1$ we denote by
$k_m^*$ the unique integer defined by the conditions
$$
k k_m^*\equiv 1 \pmod m \mand 1 \le k_m^* < m.
$$
Our approach depends on showing that for almost all primes $p$ 
there are many primes $\ell$ that are not too large
and such that $\ell_p^*$ belongs to 
a certain interval. 

It is natural that we study this problem using bounds of exponential sums
involving reciprocals of primes~\cite{Bak,FouShp,Irv}.

For a prime $p$, an integer $a$ and a real number $L$ we 
define (recall that $\ell$ and $p$ always denote prime numbers) the double exponential sum
$$
S(a;L,P) = \sum_{\ell \sim L} \sum_{p \sim P}
\exp(2 \pi i a \ell_p^*/p), 
$$
and formulate a special case of a result of 
Duke,  Friedlander
and Iwaniec~\cite[Theorem~1]{DuFrIw2},

\begin{lemma}
\label{lem:Irv} Let $L$ and $P$ be arbitrary real positive numbers  
and $a$ an 
integer satisfying $1 \le |a| \le LP$. Then
$$
|S(a;L,P)|
\le \(L^{1/2} + P^{1/2} + \min\{L,P\}\) (LP)^{1/2+o(1)} .
$$
\end{lemma}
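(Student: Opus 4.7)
The plan is to derive Lemma~\ref{lem:Irv} as a direct specialization of the Duke--Friedlander--Iwaniec bilinear estimate for Kloosterman fractions, that is, \cite[Theorem~1]{DuFrIw2}. In its general form, that theorem bounds, for arbitrary complex sequences $(\alpha_m)$ supported on $m\sim M$ and $(\beta_n)$ supported on $n\sim N$ (with $\gcd(m,n)=1$ enforced by the supports), the bilinear sum
$$
B(a)=\sum_{m\sim M}\sum_{n\sim N}\alpha_m\beta_n\exp(2\pi i a\bar m/n),
$$
where $\bar m$ is the inverse of $m$ modulo $n$, by a quantity of the shape $\|\alpha\|_2\|\beta\|_2(M^{1/2}+N^{1/2}+\min\{M,N\})(MN)^{o(1)}$, valid for $1\le|a|\le MN$.

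First I would set $M=L$, $N=P$, take $\alpha_\ell$ to be the characteristic function of the primes $\ell\in[L,2L]$ and $\beta_p$ the characteristic function of the primes $p\in[P,2P]$. With this choice the bilinear sum $B(a)$ agrees with $S(a;L,P)$ except on the diagonal $\ell=p$, on which $\ell_p^*$ is undefined; that diagonal contributes at most $\min\{L,P\}$ terms, a quantity already absorbed by the target bound. Next, using~\eqref{simplepi} in the trivial form $\pi(2X)-\pi(X)\ll X$, I get $\|\alpha\|_2\ll L^{1/2}$ and $\|\beta\|_2\ll P^{1/2}$, so that $\|\alpha\|_2\|\beta\|_2\ll (LP)^{1/2}$. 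Plugging these two facts into the bilinear estimate yields exactly the inequality asserted in the lemma, the hypothesis $1\le|a|\le LP$ matching the admissible range $1\le|a|\le MN$ in the underlying theorem.

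I do not foresee a substantial obstacle in executing this plan: the hard analytic content is entirely packaged inside~\cite[Theorem~1]{DuFrIw2}, and only the two mechanical checks above (the diagonal contribution and the $L^2$ sizes of the indicator-of-primes sequences) have to be verified before the bound drops out. The only mildly delicate point will be to confirm that the exact formulation in~\cite{DuFrIw2} really delivers the additive term $\min\{M,N\}$ inside the parenthesis (as opposed to a slightly different secondary contribution); should the published formulation differ, a standard Cauchy--Schwarz or completing-the-sum manoeuvre in whichever variable has the shorter range would bridge any such gap at no cost in the final exponent.
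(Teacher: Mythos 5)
Your proposal matches the paper exactly: the paper offers no proof of this lemma beyond declaring it a special case of \cite[Theorem~1]{DuFrIw2}, and your routine specialization (prime-indicator coefficients with $\|\alpha\|_2\ll L^{1/2}$ and $\|\beta\|_2\ll P^{1/2}$, plus the observation that the diagonal/coprimality issue costs at most $\min\{L,P\}$ terms) is precisely the intended reading. The one caveat you flag yourself --- whether the published bilinear bound, once the norms are factored out, really takes the additive $L^{1/2}+P^{1/2}+\min\{L,P\}$ shape rather than some other secondary term --- is the only substantive point, and the paper leaves it equally unverified.
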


We now state the  {\it Erd{\H o}s--Tur{\'a}n inequality\/}
(see~\cite{DrTi,KuNi}),  which  links the distributional
properties of sequences with exponential sums. 

To make this precise, for a sequence of $N$ real numbers 
$\Gamma = \(\gamma_{n} \)_{n=1}^N$ 
of the half-open interval $[0,1)$, we denote by
$\Delta_\Gamma$ its {\it discrepancy\/}, that is,
$$
\Delta_\Gamma = \sup_{0 \le \alpha \le 1}
\left|T_\Gamma(\alpha) - \alpha N \right|,
$$
where $T_\Gamma(\alpha)$ is the number of points of the sequence $\Gamma$
in the interval $[0, \alpha]$. 

\begin{lemma}
\label{lem:ErdTur} For any integer $H \ge 1$, the discrepancy $\Delta_\Gamma$ of a sequence 
$\Gamma = \(\gamma_{n} \)_{n=1}^N$ of
$N$ real numbers $\gamma_1,\ldots,\gamma_N\in[0,1)$ 
satisfies the inequality
$$
\Delta_\Gamma  \ll  \frac{N}{H} + \sum_{a=1}^H \frac{1}{a} \left| \sum_{n=1}^N \exp(2 \pi i a \gamma_n)\right|. 
$$
\end{lemma}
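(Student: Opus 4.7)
The plan is to prove the Erd{\H o}s--Tur{\'a}n inequality by the classical trigonometric-approximation method. The discrepancy $\Delta_\Gamma$ is the supremum over $\alpha$ of $|T_\Gamma(\alpha) - \alpha N|$, so the strategy is to squeeze the indicator function $\chi_{[0,\alpha]}$ between two trigonometric polynomials of degree at most $H$ whose Fourier coefficients are well controlled; evaluating these polynomials at the points $\gamma_n$ then converts the discrepancy bound into a sum of exponential sums of exactly the shape appearing on the right-hand side.

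First I would fix $\alpha \in [0,1)$ and aim to estimate $T_\Gamma(\alpha) - \alpha N$ uniformly in $\alpha$. The main ingredient is the existence of two real, $1$-periodic trigonometric polynomials
$$F_H^\pm(x) = \sum_{|a|\le H} \widehat F_H^\pm(a) \exp(2\pi i a x)$$
of degree at most $H$ which sandwich the indicator function, $F_H^-(x) \le \chi_{[0,\alpha]}(x) \le F_H^+(x)$ for every $x$, and which satisfy $\widehat F_H^\pm(0) = \alpha + O(1/H)$ together with the Fourier decay $|\widehat F_H^\pm(a)| \ll 1/|a|$ for $1 \le |a| \le H$. Such polynomials are provided by the Beurling--Selberg extremal majorants (equivalently, by Vaaler's construction): one periodises a Beurling majorant of the sign function and convolves the result with the Fej{\'e}r kernel of degree $H$. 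The constant-term estimate then follows from the extremal identity $\int_0^1 \bigl(F_H^+(x) - F_H^-(x)\bigr)\, dx \ll 1/H$. This construction is the real work; the rest of the argument is essentially bookkeeping.

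Next, starting from $\chi_{[0,\alpha]}(\gamma_n) \le F_H^+(\gamma_n)$ I would sum over $n$ and Fourier-expand, obtaining
$$
T_\Gamma(\alpha) \le \widehat F_H^+(0)\, N + \sum_{1 \le |a| \le H} \widehat F_H^+(a) \sum_{n=1}^N \exp(2\pi i a\gamma_n).
$$
The constant-term contribution equals $\alpha N + O(N/H)$. Bounding the remaining coefficients by $|\widehat F_H^+(a)| \ll 1/|a|$ and pairing the contributions at $\pm a$ (which are complex conjugate, so their sum is real) yields
$$
T_\Gamma(\alpha) - \alpha N \ll \frac{N}{H} + \sum_{a=1}^H \frac{1}{a} \left| \sum_{n=1}^N \exp(2\pi i a\gamma_n) \right|.
$$
Repeating the same reasoning with $F_H^-$ in place of $F_H^+$ gives the matching lower bound on $T_\Gamma(\alpha) - \alpha N$. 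Since the right-hand side does not depend on $\alpha$, taking the supremum over $\alpha \in [0,1)$ produces the claimed inequality for $\Delta_\Gamma$.

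The hard part is the construction of the Beurling--Selberg majorant and minorant with precisely the required Fourier decay; this is a delicate extremal problem in harmonic analysis. Fortunately it is a classical black box (see, for example, Vaaler's theorem or Montgomery's CBMS lectures on harmonic analysis), and once that construction is quoted the rest of the proof is a clean Fourier expansion as sketched above.
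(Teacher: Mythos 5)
Your proposal is correct, but there is nothing in the paper to compare it against: the authors state Lemma~\ref{lem:ErdTur} as a classical black box, citing the monographs of Drmota--Tichy~\cite{DrTi} and Kuipers--Niederreiter~\cite{KuNi}, and give no proof at all. What you have written is the standard modern proof via the Beurling--Selberg extremal functions (in Vaaler's form, as in Montgomery's CBMS lectures): the Selberg polynomials $S_H^{\pm}$ majorizing and minorizing $\chi_{[0,\alpha]}$ satisfy $\widehat{S_H^{\pm}}(0)=\alpha\pm\frac{1}{H+1}$ and $|\widehat{S_H^{\pm}}(a)|\le\frac{1}{H+1}+\min\bigl(\alpha,\frac{1}{\pi|a|}\bigr)\ll\frac{1}{|a|}$ for $1\le|a|\le H$, uniformly in $\alpha$ --- and this uniformity, which you implicitly use when taking the supremum over $\alpha$ at the end, is indeed part of the construction, so your bookkeeping goes through. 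The route in Kuipers--Niederreiter is the older, more hands-on argument (closer to the original Erd\H{o}s--Tur\'an proof, building the approximating kernel directly rather than invoking an extremal problem); it yields the same inequality with slightly worse explicit constants, whereas the Selberg--Vaaler approach buys optimal-quality constants and a cleaner Fourier expansion at the cost of quoting a nontrivial harmonic-analysis construction. For the purposes of this paper only the stated $\ll$-form is needed, so either route suffices, and your writeup correctly isolates the one genuinely hard ingredient (the majorant construction) from the routine expansion.
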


Let $D_p(L,P)$ be the discrepancy of the sequence of fractions
$$
\frac{\ell_p^*}{p}, \qquad \ell \sim L,\ p\sim P.
$$

Combining Lemma~\ref{lem:Irv} with Lemma~\ref{lem:ErdTur}
(applied with the parameter $H = L$), we obtain 
\begin{equation*}
\begin{split}
D_p(L,P)&\ll  \frac{(\pi(2L)-\pi(L))(\pi(2P)-\pi(P))}{L}\\
& + \(L^{1/2} + P^{1/2} + \min\{L,P\}\) (LP)^{1/2+o(1)},
 \end{split}
\end{equation*}
and hence the following corollary.
\begin{cor}
\label{cor:Discr} 
Let $\delta>0$ be arbitrary.
For any real positive $L$ and $P$ with $L \ge P^{\delta}$ 
we have
$$
D_p(L,P)
\le   \(L^{1/2} + P^{1/2} + \min\{L,P\}\) (LP)^{1/2+o(1)},
$$
where the implied constant may depend on $\delta$.
\end{cor}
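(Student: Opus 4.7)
The plan is to combine Lemma~\ref{lem:ErdTur} and Lemma~\ref{lem:Irv} in the most direct way possible. First I would apply the Erd{\H o}s--Tur{\'a}n inequality to the sequence of fractions $\ell_p^*/p$ with $\ell\sim L$ and $p\sim P$, taking the cutoff parameter $H=L$. The total number of terms is
$$
N=(\pi(2L)-\pi(L))(\pi(2P)-\pi(P)),
$$
so Lemma~\ref{lem:ErdTur} yields
$$
D_p(L,P)\ll \frac{N}{L} + \sum_{a=1}^{L} \frac{1}{a}\,|S(a;L,P)|.
$$
The sum on the right is exactly what Lemma~\ref{lem:Irv} is designed to estimate.

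Next I would invoke Lemma~\ref{lem:Irv} uniformly in $a$: since $1\le a\le L\le LP$, the hypothesis of that lemma is satisfied and every individual sum is bounded by $(L^{1/2}+P^{1/2}+\min\{L,P\})(LP)^{1/2+o(1)}$. The weights contribute $\sum_{a=1}^{L}1/a\ll \log L\le \log(LP)$, and this logarithmic factor is absorbed into the $(LP)^{o(1)}$ exponent. This gives
$$
\sum_{a=1}^{L} \frac{1}{a}\,|S(a;L,P)|\le \bigl(L^{1/2}+P^{1/2}+\min\{L,P\}\bigr)(LP)^{1/2+o(1)}.
$$

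It remains to check that the first term $N/L$ is dominated by the same quantity. Trivially $N\ll LP$, hence $N/L\ll P$. On the other hand the target bound is at least $P^{1/2}\cdot (LP)^{1/2}=L^{1/2}P$, and the hypothesis $L\ge P^{\delta}$ ensures $L^{1/2}\ge P^{\delta/2}\ge 1$, so $L^{1/2}P\ge P$. Consequently, $N/L$ is swallowed by the exponential-sum term, yielding the stated inequality. The role of the assumption $L\ge P^{\delta}$ is precisely to keep $L$ large enough (growing with $P$) so that the $(LP)^{o(1)}$ factor is meaningful and absorbs the auxiliary logarithms uniformly, with the implicit constant depending on $\delta$.

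I do not expect any serious obstacle: all the non-trivial analytic content is already packaged in Lemmas~\ref{lem:Irv} and~\ref{lem:ErdTur}, so the proof reduces to bookkeeping. The only point requiring mild care is the verification that the trivial $N/L$ error term produced by Erd{\H o}s--Tur{\'a}n is indeed dominated by the exponential-sum contribution in the regime $L\ge P^{\delta}$; this is the step where the range hypothesis on $L$ enters.
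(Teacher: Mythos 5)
Your argument is exactly the one the paper uses: Erd\H{o}s--Tur\'an with $H=L$, the uniform bound of Lemma~\ref{lem:Irv} summed against the harmonic weights (with the $\log$ absorbed into $(LP)^{o(1)}$), and the observation that the leading term $N/L\ll P$ is dominated by $P^{1/2}(LP)^{1/2}=L^{1/2}P$. The proposal is correct and essentially identical to the paper's derivation.
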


\subsection{Proof of Theorem~\ref{thm: Jn}}

\begin{proof} We always assume that $L \le P$. Thus, the bound of 
Corollary~\ref{cor:Discr} simplifies as 
$$
D_p(L,P)
\le    L^{1/2} P^{1+o(1)} + L^{3/2} P^{1/2+o(1)}.
$$

We see from Corollary~\ref{cor:Discr} 
that for any $L$ and $P$ and positive integer $h\le P$ 
there are 
\begin{equation}
\label{eq:W}
\begin{split}
W(h,P,L) &= h  \frac{(\pi(2L)-\pi(L))(\pi(2P)-\pi(P))}{P} \\
& + O\(  L^{1/2} P^{1+o(1)} + L^{3/2} P^{1/2+o(1)} \)\\
& =(1+o(1))\frac{hL}{\log L \log P}\\
& +  O\(  L^{1/2} P^{1+o(1)} + L^{3/2} P^{1/2+o(1)} \)
 \end{split}
\end{equation}
pairs $(\ell, p)$ of primes $\ell \sim L$, $p \sim P$ with $\ell_p^* \in [1,hp/P]$. 

We now set $L = \rf{P^\rho}$ for some positive constant $\rho < 1$, fix 
some $\varepsilon> 0$ (sufficiently small compared to $\rho$) 
and set
\begin{equation}
\label{eq:h}
h = L^{-1/2} P^{1+\varepsilon}  + L^{1/2} P^{1/2+\varepsilon}.
\end{equation}
In particular, we assume that $\varepsilon< \min\{\rho/2,(1- \rho)/2\}$  so that for  a
sufficiently large $P$ we have $h < P$.

We see from~\eqref{eq:W} that with this choice of $h$ we have
$$
W(h,P,L)  =(1+o(1))\frac{hL}{\log L \log P} > \pi(2P)-\pi(P).
$$
Hence, there is a prime number $p \sim P$ such that for 
at least  two pairs $(q,p)$ and
$(r,p)$ counted by $W(h,L,P)$ we have $q_p^*, r_p^* \in [1,hp/P]$.

We define the quantities 
$\alpha_p(q,r)$ and $\beta_p(q,r)$ 
as the smallest and the second smallest element in the set
$\{q_p^*, p-q_p^*, r_p^*, p-r_p^*\}$. 
Hence, the above choice of $q$ and $r$ implies that 
\begin{equation}
\label{eq:abp}
\alpha_p(q,r),\beta_p(q,r)  \ll h . 
\end{equation}
Now writing 
$$
qq_p^* = 1 +kp
$$
for some integer $k$, we see that 
$$
p_q^* = q-k = q- (qq_p^*-1)/p  = q + O(qh/p) = q + O(Lh/P)
$$
and similarly 
$$
p_r^* =  r + O(Lh/P).
$$
Defining $\alpha_q(p,r)$, $\beta_q(p,r)$, $\alpha_r(p,q)$ and $\beta_r(p,q)$
in full analogy with $\alpha_p(q,r)$ and $\beta_p(q,r)$, we conclude that 
\begin{equation}
\label{eq:abqr}
\alpha_q(p,r), \beta_q(p,r) , \alpha_r(p,q), \beta_r(p,q)  \ll Lh/P.
\end{equation}

We now see from~\eqref{eq:abp} and~\eqref{eq:abqr} that in the notations of the proof of~\cite[Theorem~4.1]{Bzd-3}, 
we have
$$
R   \ll hL^2,\quad 
S \ll L^2h^3/P^2, \quad 
T \ll  L^2h^2/P.
$$
Hence,  we derive
$$
R+S+T \ll  hL^2 + L^2h^3/P^2 +  L^2h^2/P \ll hL^2.
$$
Now, by~\cite[Theorem~4.1]{Bzd-3}, using~\eqref{eq:abp} and~\eqref{eq:abqr}, 
after simple calculations,  we derive
\begin{equation}
\label{eq:Jn}
J_n \ll  h L^2  \ll n (h/P). 
\end{equation}
Recalling that  $L= P^\rho + O(1)$, for a sufficiently large $P$ we obtain 
$n \asymp P^{1+ 2 \rho}$. 
Since $\varepsilon>0$ can be arbitrary small, 
the bound~\eqref{eq:Jn} implies that 
$$
J_n \ll n^{1-\vartheta +o(1)},
$$
where, recalling~\eqref{eq:h}, we have
$
(1+2\rho)\vartheta = \min\{\rho/2, 1/2-\rho/2\}.
$
Choosing $\rho = 1/2$ we obtain $\vartheta= 1/8$ and the
result follows. 
\end{proof}

\section{Recapitulation of the results obtained}
\label{recap}

Here we give a short summary of the results obtained, which are  mainly 
motivated by, and related to, those from~\cite{Bzd-3} and~\cite{HLLP}.

The paper~\cite{HLLP} on maximum gaps in (inverse) cyclotomic polynomials:
\begin{itemize}
\item We provide an asymptotic for the quantity $\# \cR_3(x)$  (Theorem~\ref{florian}), cf. 
Corollary~\ref{precisie}. This asymptotic makes Remark 1 in 
~\cite{HLLP} that $\# \cR_3(x)$ is small quantitative.
\item The right order of magnitude for the number of $n=pqr\le x$ with
 $p-1 > 2n/p-{\rm deg}\Psi_n$ is provided (Theorem~\ref{E4}).
\item Estimates for the number of $n=pqr\le x$ with $g(\Psi_n)\ne 2n/p-{\rm deg}\Psi_n$ are 
provided (Theorem~\ref{gnoteq}).
\item The gap problem for binary cyclotomic polynomials is reformulated in terms of 
numerical semigroups (Section~\ref{numericalsemi}).
\item Using this approach the gap structure is studied more in detail (Theorem~\ref{oana}).
\item An easy reproof of $g(\Phi_{pq})=p-1$ due to Nathan Kaplan is given (Section~\ref{achtergrond}).
\item The $\Phi_{pq}$ with $q\equiv \pm1\pmod p$ are shown to be special 
(Theorem~\ref{oana}) and their number for
$pq\le x$ quantified (Corollary~\ref{coroana}).
\item The gap structure for
binary inclusion-exclusion polynomials is considered (Section~\ref{smallgen}).
\end{itemize}

The paper~\cite{Bzd-3} on jumps of cyclotomic polynomials:
\begin{itemize}
\item We use bounds of double Kloosterman sums over primes to  show that $J_n\ll n^{7/8+o(1)}$ for
infinitely many ternary $n$ (Theorem~\ref{thm: Jn}).
\end{itemize}

\section*{Acknowledgements}
The authors would like to thank Nathan Kaplan for his permission to 
present his elegant proof of~\eqref{eq:pq} (communicated by
e-mail to the third author). Further, Alessandro Languasco and Alessandro
Zaccagnini for e-mail correspondence regarding the numerical evaluation of 
the constant $C$ that appears in Corollary~\ref{coroana}.

The first author worked on cyclotomic coefficient gaps while carrying an internship at the Max Planck Institute for Mathematics in August 2010.
The second author worked on this paper
at the Max Planck Institute for Mathematics
 during a one-week visit in January 2014, 
the third author in April 2015 and the fifth author during a visit 
July-December 2013. All authors gratefully acknowledge the support, the hospitality 
and the excellent conditions for collaboration at the Max Planck Institute for Mathematics. 
The project was continued whilst the authors were working
at other institutions.
The fifth author was also supported in part by the ARC Grants
 DP130100237 and DPDP140100118.

\end{document}